\documentclass[11pt,reqno]{amsart}

\usepackage[T1]{fontenc}
\usepackage{lmodern}
\usepackage{microtype}
\usepackage{amsmath,amssymb,amsthm,mathtools}
\usepackage{xcolor}
\usepackage[hidelinks]{hyperref}
\usepackage{tikz-cd}
\usepackage[nameinlink,noabbrev]{cleveref}
\allowdisplaybreaks
\numberwithin{equation}{section}

\newtheorem{theorem}{Theorem}[section]
\newtheorem{proposition}[theorem]{Proposition}
\newtheorem{lemma}[theorem]{Lemma}
\newtheorem{corollary}[theorem]{Corollary}
\theoremstyle{definition}
\newtheorem{definition}[theorem]{Definition}
\newtheorem{example}[theorem]{Example}
\theoremstyle{remark}
\newtheorem{remark}[theorem]{Remark}

\crefname{theorem}{Theorem}{Theorems}
\crefname{proposition}{Proposition}{Propositions}
\crefname{lemma}{Lemma}{Lemmas}
\crefname{corollary}{Corollary}{Corollaries}
\crefname{remark}{Remark}{Remarks}
\crefname{definition}{Definition}{Definitions}
\crefname{example}{Example}{Examples}

\newtheorem{mainthm}{Theorem}

\crefname{mainthm}{Theorem}{Theorems}
\newtheorem{conjecture}[theorem]{Conjecture}
\crefname{conjecture}{Conjecture}{Conjectures}

\newcommand{\CC}{\mathbb C}
\newcommand{\OO}{\mathcal O}
\newcommand{\m}{\mathfrak m}
\newcommand{\n}{\mathfrak n}
\newcommand{\Der}{\operatorname{Der}}
\newcommand{\Soc}{\operatorname{Soc}}
\newcommand{\mult}{\operatorname{mult}}
\newcommand{\corank}{\operatorname{corank}}
\newcommand{\Hess}{\operatorname{Hess}}
\newcommand{\Spec}{\operatorname{Spec}}
\newcommand{\im}{\operatorname{im}}
\newcommand{\coker}{\operatorname{coker}}
\newcommand{\ol}[1]{\overline{#1}}

\title[Derivations of Generalized Moduli Algebras]
{Derivations of Generalized Moduli Algebras of Isolated Hypersurface
Singularities}

\author{Zhiwen Liu}
\address{Beijing Institute of Mathematical Sciences and Applications (BIMSA), Beijing, 100084, China}
\email{liuzhiwen@bimsa.cn}

\author{Stephen S.-T. Yau}
\address{Department of Mathematical Sciences, Tsinghua University, Beijing, 100084,  China.}
\email{yau@uic.edu}
\date{}

\subjclass[2020]{Primary 14B05; Secondary 13H10, 13N15, 32S05}
\keywords{isolated hypersurface singularity, moduli algebra, generalized moduli algebra, Yau algebra, derivation Lie algebra, Hessian determinant, socle, splitting lemma}
\RequirePackage{color}\definecolor{RED}{rgb}{1,0,0}\definecolor{BLUE}{rgb}{0,0,1} 
\DeclareOldFontCommand{\sf}{\normalfont\sffamily}{\mathsf} 
\providecommand{\DIFaddtex}[1]{{\protect\color{black} #1}} 
\providecommand{\DIFaddbegin}{} 
\providecommand{\DIFaddend}{} 
\providecommand{\DIFadd}[1]{\texorpdfstring{\DIFaddtex{#1}}{#1}} 
\RequirePackage{listings} 
\RequirePackage{color} 
\lstdefinelanguage{DIFcode}{ 
  moredelim=[il][\color{red}\scriptsize]{\%DIF\ <\ }, 
  moredelim=[il][\color{blue}\sffamily]{\%DIF\ >\ } 
} 
\lstdefinestyle{DIFverbatimstyle}{ 
	language=DIFcode, 
	basicstyle=\ttfamily, 
	columns=fullflexible, 
	keepspaces=true 
} 
\lstnewenvironment{DIFverbatim}{\lstset{style=DIFverbatimstyle}}{} 
\lstnewenvironment{DIFverbatim*}{\lstset{style=DIFverbatimstyle,showspaces=true}}{} 

\begin{document}

\begin{abstract}
Let $(V,0)$ be an isolated complex hypersurface singularity defined by
$f\in\CC\{x_1,\ldots,x_n\}$.
Let $A(V)$ be the moduli(Tjurina) algebra,
let $A^*(V)$ be the generalized moduli algebra, and let
\[
 L(V)=\Der_{\CC}(A(V),A(V)),\qquad
 L^*(V)=\Der_{\CC}(A^*(V),A^*(V))
\]
be the Yau algebra and the new Yau algebra of \(V\), respectively.
Write $\lambda(V)=\dim_{\CC}L(V)$ and
$\lambda^*(V)=\dim_{\CC}L^*(V)$.
We prove that the difference between these two dimensions is determined by
the Hessian corank.  In the Morse case, the generalized moduli algebra is zero, and hence
both derivation Lie algebras are zero. There is 
\[
 \lambda^*(V)=\lambda(V)-
 \begin{cases}
  1,&\corank\Hess(f)(0)=1,\\
  0,&\corank\Hess(f)(0)\ne1.
 \end{cases}
\]
In particular, $\lambda^*(V)=\lambda(V)$ when $n\ge2$ and
$\mult(f)\ge3$.  This proves Conjecture~1.1 proposed in \cite{ChenHussainYauZuo2020}. 
We construct an exact sequence whose two end terms are
copies of the socle of the Milnor algebra. This homological approach reveals the detailed algebraic structure underlying the dimension count.  We also give a alternative proof that pushes the original arguments in the proof of \cite[Theorem~C]{ChenHussainYauZuo2020}  further.
  The general case is obtained by combining Saito's criterion for the
non-quasi-homogeneous case.
\end{abstract}

\maketitle
\enlargethispage{2pt}

\section{Introduction}

Let
\[
 \OO_n=\CC\{x_1,\ldots,x_n\},
 \qquad
 \m=(x_1,\ldots,x_n),
\]
and let $f\in\m^2$ have an isolated critical point at the origin.  Set
$(V,0)=V(f)$ and write
\[
 J_f=\left(
 \frac{\partial f}{\partial x_1},\ldots,
 \frac{\partial f}{\partial x_n}
 \right),
 \qquad
 h_f=\det\Hess(f),
\]
{\color{black}where $J_f$ is the Jacobian ideal and $h_f$ is the Hessian determinant.}
The quotient
\[
 A(V)=\OO_n/(f,J_f)
\]
is the \emph{moduli algebra}, also known as the Tjurina algebra.  The
Mather--Yau theorem states that its isomorphism class determines and is determined by the
analytic isomorphism type of the isolated hypersurface germ
\cite{MatherYau1982}.

A key object of study is the derivation Lie algebra of $A(V)$, 
\[
 L(V)=\Der_{\CC}(A(V),A(V)),
\]
which is a finite-dimensional Lie algebra.
  Yau proved that for any isolated hypersurface singularity, $L(V)$ is a solvable Lie algebra; see \cite{Yau1986},\cite{2}. This  result led to $L(V)$ being named the Yau algebra \cite{Yau1986,4,5}, and
\[
 \lambda(V)=\dim_{\CC}L(V)
\]
is its \emph{Yau number} \cite{ElashviliKhimshiashvili2006}. This establishes a profound connection between singularity theory and solvable Lie algebras.

In \cite{ChenHussainYauZuo2020},the authors introduced the  Artinian local algebra, i.e., the finite-dimensional quotient
algebra
\[
 A^*(V)=\OO_n/(f,J_f,h_f)
\]
and called it the \emph{generalized moduli algebra}
\cite{ChenHussainYauZuo2020}.  {\color{black}The construction is motivated by Dimca's
theorem on zero-dimensional isolated complete intersections
\cite{Dimca1984}: in the quasi-homogeneous case, the singular subspace of
the Jacobian complete intersection has coordinate algebra $A^*(V)$.
Together with the Mather--Yau theorem, this shows that $A^*(V)$ is a
complete invariant within the quasi-homogeneous class; see
\cite[Remark~2.1]{ChenHussainYauZuo2020}. } The corresponding derivation
Lie algebra and its dimension are
\[
 L^*(V)=\Der_{\CC}(A^*(V),A^*(V)),
 \qquad
 \lambda^*(V)=\dim_{\CC}L^*(V).
\]
We refer to $L^*(V)$ as the \emph{new Yau algebra}.  When $A^*(V)$ is
the zero algebra, we use the convention $\Der_{\CC}(A^*(V),A^*(V))=0$.

{\color{black}
The construction of Yau algebras belongs to a broader interaction
between singularity theory and finite-dimensional Lie theory.
Brieskorn's work relating rational double points to simple algebraic
groups provides a classical instance of this interaction
\cite{Brieskorn1970}. 
By the Levi decomposition, every finite-dimensional complex Lie algebra
admits a decomposition
\[
 \mathfrak g=\mathfrak s\ltimes\operatorname{rad}(\mathfrak g),
\]
where $\mathfrak s$ is a semisimple Levi subalgebra and
$\operatorname{rad}(\mathfrak g)$ is the solvable radical of
$\mathfrak g$. The semisimple part admits a uniform classification in terms of root
systems and Dynkin diagrams \cite{Humphreys1972,Jacobson1962}, whereas solvable, and in particular nilpotent, Lie algebras do not admit
a comparably uniform classification \cite{Malcev1950}.

  Yau's
construction provides a complementary geometric direction: it
associates with every isolated hypersurface singularity a
finite-dimensional solvable Lie algebra.  It therefore gives a
geometrically organized family of solvable Lie algebras that can be
studied through local invariants of singularities; see also
\cite{Yau1986,Yau1991,SeeleyYau1990}.

The new Yau algebra belongs to the same solvable Lie-theoretic framework
throughout the range $\mult(f)\ge4$.  Indeed, if $f$ is not
quasi-homogeneous, then $A^*(V)=A(V)$ and consequently
$L^*(V)=L(V)$.  If $f$ is quasi-homogeneous and $\mult(f)\ge4$, it has been proved that $L^*(V)$ is solvable
\cite[Corollary~2.1]{ChenHussainYauZuo2020}.  Hence, for every isolated
hypersurface singularity of multiplicity at least four, $L(V)$ and
$L^*(V)$ are two finite-dimensional solvable Lie algebras naturally
associated with the same singularity.  

The comparison of their dimensions also has a direct geometric
interpretation in terms of local algebras.  For a finite-dimensional
commutative $\CC$-algebra $B$, the derivation algebra
$\Der_{\CC}(B,B)$ is the tangent Lie algebra at the identity of the
algebraic automorphism group of $B$; equivalently, it describes the
infinitesimal algebra automorphisms of $B$.  Thus $\lambda(V)$ and
$\lambda^*(V)$ measure the dimensions of the infinitesimal symmetry
algebras encoded by the two intrinsic local algebras $A(V)$ and
$A^*(V)$, respectively.  Since
\[
 A^*(V)=A(V)/(\overline{h_f}),
\]
determining the difference $\lambda(V)-\lambda^*(V)$ asks how imposing
the additional Hessian relation changes the dimension of the
infinitesimal symmetry algebra. 

From both the Lie-theoretic and geometric
viewpoints, it is therefore a natural and meaningful numerical problem.
The following conjecture was proposed in
\cite{ChenHussainYauZuo2020}.
}

\begin{conjecture}[{\cite[Conjecture~1.1]{ChenHussainYauZuo2020}}]
\label{conj:CHYZ}
Let $n\ge2$, and let $f\in\OO_n$ define an isolated hypersurface
singularity of multiplicity at least three.  Then
\begin{equation}\label{eq:CHYZ-conjecture-intro}
 \lambda^*(V)=\lambda(V).
\end{equation}
\end{conjecture}

The non-quasi-homogeneous case was already settled in the same paper.
Indeed, Saito's criterion gives $h_f\in(f,J_f)$, and hence
$A^*(V)=A(V)$ \cite[Corollary~3.8]{Saito1974}.  

For weighted
homogeneous singularities, the authors proved
$\lambda^*(V)\le\lambda(V)$ in arbitrary dimension and equality when
$2\le n\le4$.  They also obtained equality for canonical weights
\[
 d\ge2w_1\ge2w_2\ge\cdots\ge2w_n>0
\]
under the additional condition $w_n\ge w_1/2$, and for binomial
singularities; see
\cite[Theorems~C and~D, Remark~6.1, and Theorem~7.1]{ChenHussainYauZuo2020}.

Our first result proves \cref{conj:CHYZ} for weighted homogeneous
singularities in every dimension, without an additional restriction on
the weights.  Combining this with the non-quasi-homogeneous case proves
the conjecture in full.  We then remove the multiplicity assumption and distinguish  the two dimensions using Hessian corank.

\begin{mainthm}[Main theorem]\label{thm:main-intro}
Let $f\in\m^2\subset\OO_n$ define an isolated hypersurface singularity,
and set
\[
c(f):=\corank\Hess(f)(0)
=\dim_{\CC}\ker\Hess(f)(0),
\]
where
\[
\Hess(f)(0)
=\left(\frac{\partial^2f}{\partial x_i\partial x_j}(0)\right)_{1\le i,j\le n}.
\]
Then the following statements hold.
\begin{enumerate}
\item If $c(f)=0$, then $f$ is Morse,
\[
 A(V)\cong\CC,
 \qquad
 A^*(V)=0,
\]
and
\[
 \lambda(V)=\lambda^*(V)=0.
\]
\item If $c(f)=1$, then the Milnor number $\mu=\mu(V)$ satisfies
$\mu\ge2$, and
\[
 A(V)\cong\CC[t]/(t^\mu),
 \qquad
 A^*(V)\cong\CC[t]/(t^{\mu-1}).
\]
Hence
\[
 \lambda(V)=\mu-1,
 \qquad
 \lambda^*(V)=\mu-2.
\]
\item If $c(f)\ge2$, then
\[
 \lambda^*(V)=\lambda(V).
\]
\end{enumerate}
\end{mainthm}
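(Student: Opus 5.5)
The plan is to split first into the quasi-homogeneous and non-quasi-homogeneous cases. If $f$ is not quasi-homogeneous, Saito's criterion \cite[Corollary~3.8]{Saito1974} gives $h_f\in(f,J_f)$. Hence $A^*(V)=A(V)$ and $\lambda^*=\lambda$. Such an $f$ cannot have $c(f)\le1$: in those cases $f$ is right equivalent to $x_1^2+\dots+x_n^2$ or to $x_1^{\mu+1}+x_2^2+\dots+x_n^2$, both quasi-homogeneous. So this case falls entirely under item~(3).

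For quasi-homogeneous $f$ we may take $f$ weighted homogeneous after a coordinate change. Then $f\in J_f$ by the Euler relation, and $A(V)=M_f:=\OO_n/J_f$ is a graded Artinian Gorenstein algebra. It is classical that $\ol{h_f}$ spans the one-dimensional socle $\Soc(M_f)$, which is the top-degree piece. Therefore
\[
A^*(V)=M_f/\Soc(M_f).
\]
Items~(1) and~(2) follow directly from the splitting lemma. For $c=0$, $M_f=\CC$, so $A^*=0$. For $c=1$, $f\sim x_1^{\mu+1}+Q$ with $\mu\ge2$ (a one-variable isolated singularity of order at least three), so $M_f\cong\CC[t]/(t^\mu)$. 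Its socle is $(t^{\mu-1})$, giving $A^*\cong\CC[t]/(t^{\mu-1})$. Finally, $\Der(\CC[t]/(t^k))$ consists of $D$ with $D(t)\in(t)$, so it has dimension $k-1$.

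For $c\ge2$ I would again apply the splitting lemma, $f\sim g(x_1,\dots,x_c)+x_{c+1}^2+\dots+x_n^2$ with $g\in\m^3$. This changes neither $M_f$ nor the ideal generated by the Hessian modulo $J_f$, since $h_f=2^{n-c}h_g$. So we may assume $n=c\ge2$ and $\mult f\ge3$. Write $M=M_f$, $S=\Soc(M)=\CC\ol{h_f}$, and $\n$ for the maximal ideal of $M$, so that $\n/\n^2\cong\m/\m^2$ has dimension $n$. The goal is an exact sequence
\[
0\to \operatorname{Hom}_{\CC}(\n/\n^2,S)\to \Der(M)\xrightarrow{\ \rho\ } \Der(M/S)\to \operatorname{Hom}_{\CC}(\n/\n^2,S)\to0 ,
\]
whose end terms are copies of $S^{\oplus n}$. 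It immediately gives $\lambda=\lambda^*$.

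The steps, in order, are as follows.

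\emph{Step 1.} Every derivation of $M$ preserves $S$, so that $\rho$ is defined. Decompose $D$ into weighted-homogeneous components. Since $\mult f\ge3$, every $x_i$ is nonzero in $M$, and $S$ is the unique top degree. A component of positive degree kills $S$. A component of nonpositive degree maps $S$ into $\n$, because $D(\n^k)\subset\n^{k-1}$ and $S\subset\n^2$. Its image also lies in $\operatorname{ann}(\n)$: applying $D$ to $x_i s=0$ gives $x_iD(s)=-sD(x_i)\in S\cdot\n=0$ when $D(x_i)\in\n$. If some $D(x_i)$ has a unit part, use instead that $s$ is of top degree and compare degrees.

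\emph{Step 2.} $\ker\rho$ consists of the derivations with image in $S$. Since $\n S=0$, these are exactly the linear maps $M\to S$ vanishing on $\CC\cdot1+\n^2$, which form $\operatorname{Hom}(\n/\n^2,S)$.

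\emph{Step 3.} Given $\bar D\in\Der(M/S)$, lift the values $\bar D(x_i)$ to $M$ arbitrarily. The failure to extend to a derivation of $M$ is measured by the images $D(\partial_j f)$, which lie in $S$. This defines a map $\Der(M/S)\to S^n$ vanishing exactly on $\im\rho$.

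\emph{Step 4.} Show this obstruction map is surjective, which is the main obstacle. I would first try the explicit derivations of $M/S$ sending $x_j\mapsto \ol{h_f}\,/\,x_j$-type elements, that is, a basis of the degree-$(\deg h_f-w_j)$ part, with all other $x_i\mapsto0$. These are derivations of $M/S$ because every product lands in $S$, which is zero there. Their obstructions can then be computed via the Gorenstein pairing on $M$, and the goal is to show the resulting $n\times n$ matrix is nondegenerate. The hypothesis $n\ge2$ should enter exactly here. For $n=1$ the sequence fails, consistent with the drop by one in item~(2).

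If this direct surjectivity argument stalls, the fallback is the route of \cite[Theorem~C]{ChenHussainYauZuo2020}, which already gives $\lambda^*\le\lambda$. The remaining task is then to produce $\lambda$ linearly independent graded derivations of $A^*(V)$, by pushing its weight-by-weight comparison to arbitrary weights.
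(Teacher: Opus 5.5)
Your preliminary reductions agree with the paper: Saito's criterion for the non-quasi-homogeneous case, the splitting lemma for items (1)--(2), and reduction to a weighted homogeneous germ with $n=c\ge2$, $\mult f\ge3$. Steps~1--3 also reproduce the paper's framework. Your $\operatorname{Hom}_{\CC}(\n/\n^2,S)$ is the paper's $\Sigma=\Soc(M)^{\oplus n}$. Your map $\bar D\mapsto Hu$ is the restriction to $\Der_{\CC}(M/S,M/S)$ of the snake-lemma connecting map $\partial:\Der_{\CC}(M,M/S)\to\Sigma$.

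However, all the content is in Step~4, and that step has a genuine gap: the candidates you propose there cannot work. If $D$ is homogeneous of degree $\epsilon$, the $i$-th component of $Hu$ lies in $M_{\epsilon+d-w_i}$. So it can be a nonzero element of $S=M_\sigma$ only if $\epsilon=\sigma-d+w_i$. Your maps $x_j\mapsto e\in M_{\sigma-w_j}$, $x_i\mapsto 0$ ($i\ne j$), have degree $\sigma-2w_j$. They therefore contribute to the $i$-th component only in the coincidence $d=w_i+2w_j$.

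For $f=x^4+y^4$ one has $M=\CC[x,y]/(x^3,y^3)$ and $\sigma=4$. Your candidates $x\mapsto x^2y,\ xy^2$ (and symmetrically for $y$) are already derivations of $M$, so your $n\times n$ matrix is identically zero. Nor are these maps derivations of $M/S$ in general. The product $\ol{f_{ij}}\,e$ has degree $\sigma+d-w_i-2w_j$, which can be below $\sigma$. For example, take $f=x^2y+y^2z+z^7$ with weights $(2,3,1;7)$ and $\sigma=9$. The map $y\mapsto xz^4$ sends $f_x=2xy$ to $2x^2z^4=-4yz^5$, which is a nonzero element of $M_8\subset M/S$.

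The missing idea is to use the columns of the adjugate Hessian, which have exactly the right degrees $\sigma-d+w_j+w_p$. Put $u^{(p)}=\operatorname{adj}(H)e_p$. Cramer's rule $H\operatorname{adj}(H)=hI_n$ gives $Hu^{(p)}=\ol h\,e_p$. So $u^{(p)}$ defines a derivation $D_p:M\to M/S$ with obstruction $\ol h\,e_p$; for $x^4+y^4$ these are $x\mapsto y^2$ and $y\mapsto x^2$.

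You must still check that $D_p$ is a derivation of $M/S$, i.e.\ that $D_p(\ol h)=0$ in $M/S$. The map $D_p$ has degree $\epsilon_p=(n-1)d-2W+w_p$, and $M/S$ vanishes in degrees $\ge\sigma$, so it suffices to show $\epsilon_p\ge0$. This follows from isolatedness and $\mult f\ge3$. The germ $f$ contains a monomial $x_r^ax_j$ with $a\ge2$ and $w_r$ maximal, so $d\ge2w_r+w_j$, and the factor $n-1\ge1$ finishes the estimate. This is exactly where $n\ge2$ enters; for $n=1$ one gets $\epsilon_1=-1$.

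With this, the $D_p$ are derivations of $M/S$ whose obstructions form a basis of $S^{n}$, which is precisely your Step~4. Your fallback via the earlier weight-by-weight comparison only restates the goal and does not supply this argument.
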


We indicate the main steps of the proof.  Suppose first that $f$ is a
weighted homogeneous polynomial, $n\ge2$, and $\mult(f)\ge3$.  The
moduli algebra is then the Milnor algebra
\[
 M_f=\CC[x_1,\ldots,x_n]/J_f,
\]
and the class of $h_f$ generates its one-dimensional socle.  The
adjugate-Hessian derivations and the relevant degree estimate appear
already in the proof of \cite[Theorem~C]{ChenHussainYauZuo2020}.  Using
relative derivations, we organize this calculation into the exact
sequence of complex vector spaces
\begin{equation}\label{eq:intro-hessian-cramer}
\begin{aligned}
 0\longrightarrow\Soc(M_f)^{\oplus n}
 &\longrightarrow\Der_{\CC}(M_f,M_f)\\
 &\longrightarrow\Der_{\CC}(A^*(V),A^*(V))
 \longrightarrow\Soc(M_f)^{\oplus n}
 \longrightarrow0.
\end{aligned}
\end{equation}
The two end terms have the same dimension, giving the required dimension
equality.  We also record a shorter proof that combines the calculation
in \cite[Theorem~C]{ChenHussainYauZuo2020} with preservation of the
socle under derivations.

Finally, the holomorphic splitting lemma
\cite[Theorem~2.47]{GreuelLossenShustin2007} reduces an arbitrary
isolated hypersurface germ to a residual germ in $c(f)$ variables.  The
Morse case has no residual variables, one residual variable gives the
corank-one exception, and a residual germ in at least two variables has
multiplicity at least three.  This proves the full corank formula.

The paper is organized as follows.   Section~2  \DIFaddbegin \DIFadd{develops the local
analytic background, the finite-dimensional algebras attached to a
hypersurface germ, their }\DIFaddend derivation Lie algebras, and  \DIFaddbegin \DIFadd{the required
invariance facts}\DIFaddend.  In Section~3 we
review socles and treat the non-quasi-homogeneous case.  In Section~4 we
give the two proofs described above.  In Section~5 we use the splitting
lemma to prove
\cref{thm:main-intro}.  Section~6 gives consequences and examples.

\section{Preliminaries and invariance}

\subsection{\DIFaddbegin \DIFadd{Hypersurface germs and their local algebras}\DIFaddend}

Let
 \DIFaddbegin \[
 \DIFadd{P_n=\CC[x_1,\ldots,x_n]
}\]
\DIFadd{be the polynomial ring in $n$ variables.  The local analytic counterpart
of $P_n$ is the ring $\OO_n$ of germs of holomorphic functions at the
origin of $\CC^n$.  Recall that a germ records a holomorphic function only
in an unspecified neighborhood of the origin: two functions represent the
same germ when they agree on some smaller neighborhood.  Taking Taylor
series identifies
}\[
 \DIFadd{\OO_n=\CC\{x_1,\ldots,x_n\},
}\]
\DIFadd{the ring of convergent power series.  It is a local $\CC$-algebra with
maximal ideal
}\[
 \DIFadd{\m=(x_1,\ldots,x_n)
   =\{g\in\OO_n:g(0)=0\}
}\]
\DIFadd{and residue field $\OO_n/\m\cong\CC$.  A germ $u\in\OO_n$ is a unit
precisely when $u(0)\neq0$.
}

\DIFadd{For $f\in\m$, the zero set of a representative of $f$ in a sufficiently
small neighborhood of the origin defines a hypersurface germ, denoted by
}\[
 \DIFadd{(V,0)=V(f)\subset(\CC^n,0).
}\]
\DIFadd{Only the behavior near the origin is retained.  In particular, replacing
$f$ by $uf$, where $u$ is a unit, does not change the hypersurface germ.
The local function algebra of $V$ is
}\[
 \DIFadd{F(V)=\OO_n/(f),
}\]
\DIFadd{where $(f)$ is the principal ideal generated by $f$.  This algebra is
usually infinite-dimensional over $\CC$, but it is the natural local
coordinate ring of the germ.
}

\DIFadd{Throughout the paper, $f\in\m^2$, so the origin is a critical point.
The multiplicity of $f$ is
}\[
 \DIFadd{\mult(f)=\max\{r\ge1:f\in\m^r\};
}\]
\DIFadd{equivalently, it is the least total degree of a nonzero term in the
power-series expansion of $f$.  Write
}\[
 \DIFadd{J_f=\left(
 \frac{\partial f}{\partial x_1},\ldots,
 \frac{\partial f}{\partial x_n}
 \right)
}\]
\DIFadd{for the Jacobian ideal.  We say that $f$ has an }\DIFaddend isolated critical point
at the origin  \DIFaddbegin \DIFadd{if, in a sufficiently small neighborhood, the gradient
$\nabla f$ vanishes only at the origin.  Algebraically, this is equivalent
to
}\[
 \DIFadd{\sqrt{J_f}=\m,
}\]
\DIFadd{or, equivalently, to $J_f$ being }\DIFaddend $\m$-primary\DIFaddbegin \DIFadd{.  Thus the quotient
}\[
 \DIFadd{M_f:=\OO_n/J_f
}\]\DIFaddend 
is finite-dimensional.   \DIFaddbegin \DIFadd{It is called the \emph{Milnor algebra} of the
defining germ $f$.  Whenever the defining germ is fixed, we also use the
notation $M(V)=M_f$.  Its dimension
}\[
 \DIFadd{\mu(V)=\dim_{\CC}M_f
}\]\DIFaddend 
is the Milnor number.
\DIFaddbegin 

\DIFadd{The Hessian matrix and its determinant are denoted by
}\[
 \DIFadd{\Hess(f)=
 \left(\frac{\partial^2f}{\partial x_i\partial x_j}\right)_{1\le i,j\le n},
 \qquad
 h_f=\det\Hess(f).
}\]
\DIFaddend If $\ol f$ and $\ol h_f$  \DIFaddbegin \DIFadd{are their classes }\DIFaddend in $M(V)$, then  the moduli
algebra  \DIFaddbegin \DIFadd{can be written as
}\[
 \DIFadd{A(V)=\OO_n/(f,J_f)=M(V)/(\ol f).
}\]
Thus $A(V)$ is its local
coordinate algebra( also called the \emph{Tjurina algebra}).  The  generalized moduli algebra \DIFaddbegin \DIFadd{is
}\[
 \DIFadd{A^*(V)=\OO_n/(f,J_f,h_f)
       =M(V)/(\ol f,\ol h_f),
}\]
\DIFadd{and is obtained by supposing the additional Hessian relation.  Both }\DIFaddend are
finite-dimensional  \DIFaddbegin \DIFadd{quotients of the Milnor
algebra.  The number
}\DIFaddend \[
 \tau(V)=\dim_{\CC}A(V)
\]
is the Tjurina number.
The  \DIFaddbegin \DIFadd{Mather--Yau theorem shows that the finite-dimensional algebra $A(V)$
 determines and is determined by the analytic isomorphism type of the isolated
hypersurface germ }\cite{MatherYau1982}\DIFadd{.  The role of $A^*(V)$ and the
reason for adjoining the Hessian determinant are recalled below.
}\DIFaddend 

 \DIFaddbegin \subsection{\DIFadd{Weighted homogeneous singularities and the Hessian socle}}

\DIFadd{Assign positive rational weights $w_1,\ldots,w_n$ to the variables.  The
weighted degree of a monomial is
}\[
 \DIFadd{\deg_w(x_1^{a_1}\cdots x_n^{a_n})
 =\sum_{i=1}^n a_iw_i.
}\]
\DIFadd{A }\DIFaddend polynomial $f$ is \emph{weighted homogeneous}, also called
\emph{quasi-homogeneous},  \DIFaddbegin \DIFadd{of }\DIFaddend weighted degree $d$  \DIFaddbegin \DIFadd{if every monomial
}\DIFaddend occurring in $f$  \DIFaddbegin \DIFadd{has weighted degree $d$.  The tuple
}\[
 \DIFadd{(w_1,\ldots,w_n;d)
}\]\DIFaddend 
 \DIFaddbegin \DIFadd{is its weight type.  Multiplying all }\DIFaddend weights and $d$  \DIFaddbegin \DIFadd{by a common positive
integer allows them to be taken integral.  A }\DIFaddend hypersurface germ is called
quasi-homogeneous if\DIFaddbegin \DIFadd{, }\DIFaddend after an analytic change of coordinates\DIFaddbegin \DIFadd{, it has a
weighted homogeneous representative}\DIFaddend.
\DIFaddbegin 

\DIFadd{The weighted Euler vector field
}\[
 \DIFadd{E=\sum_{i=1}^n w_i x_i\frac{\partial}{\partial x_i}
}\]
\DIFadd{satisfies the Euler identity
}\[
 \DIFadd{E(f)=\sum_{i=1}^n w_i x_i\frac{\partial f}{\partial x_i}=df.
}\]
\DIFaddend Consequently,  \DIFaddbegin \DIFadd{$f\in J_f$ for a weighted homogeneous representative, and
therefore
}\[
 \DIFadd{A(V)=M(V),\qquad \tau(V)=\mu(V).
}\]\DIFaddend 
 \DIFaddbegin \DIFadd{The grading also gives
}\[
 \DIFadd{\deg_w\left(\frac{\partial f}{\partial x_i}\right)=d-w_i,
 \qquad
 \deg_w\left(\frac{\partial^2f}{\partial x_i\partial x_j}\right)
 =d-w_i-w_j.
}\]
\DIFadd{Every term in the Hessian determinant consequently has weighted degree
}\[
 \DIFadd{\deg_w(h_f)=nd-2\sum_{i=1}^n w_i.
}\]
\DIFadd{As explained below, this is the socle degree denoted by $\sigma$ in
Section~4.
}

\DIFadd{For an isolated weighted homogeneous singularity, the Milnor number may
also be read from the weights:
}\begin{equation}\DIFadd{\label{eq:weighted-milnor-number}
 \mu(V)=\prod_{i=1}^n\frac{d-w_i}{w_i};
}\end{equation}
\DIFadd{indeed, the partial derivatives form a weighted homogeneous regular
sequence of degrees $d-w_1,\ldots,d-w_n$, and the formula follows from
the corresponding complete-intersection Hilbert series; see
}\cite[Proposition~2.1]{ChenHussainYauZuo2020}\DIFadd{.  When the weights are
ordered and satisfy
}\[
 \DIFadd{d\ge2w_1\ge2w_2\ge\cdots\ge2w_n>0,
}\]
\DIFadd{the weight type is called canonical. 
}

\DIFadd{We next recall the socle statement that links the Hessian determinant to
the generalized moduli algebra.  If $(R,\n)$ is a finite-dimensional
local $\CC$-algebra, its socle is
}\[
 \DIFadd{\Soc(R)=(0:_R\n)
        =\{r\in R:\n r=0\}.
}\]
\DIFadd{Thus the socle consists of the elements annihilated by the maximal ideal;
if $\n^{s+1}=0$ and $\n^s\neq0$, then
$\n^s\subseteq\Soc(R)$.  
Since $J_f$ is $\m$-primary and is generated by
$n$ elements in the regular local ring $\OO_n$, its generators form a
regular sequence.  Hence $M(V)$ is an
Artinian complete intersection and, in particular, an Artinian
Gorenstein algebra; see }\cite[Chapter~2]{BrunsHerzog1993}\DIFadd{.  A theorem of
Scheja and Storch identifies its socle generator
}\cite{SchejaStorch1975}\DIFadd{:
}\begin{equation}\DIFadd{\label{eq:milnor-socle}
 \Soc(M(V))=\CC\ol{h_f}.
}\end{equation}
\DIFadd{In the }\DIFaddend quasi-homogeneous case\DIFaddbegin \DIFadd{, $f\in J_f$, and it follows that
}\[
 \DIFadd{A^*(V)=M(V)/(\ol h_f)=M(V)/\Soc(M(V)).
}\]\DIFaddend 
\DIFaddbegin \DIFadd{Consequently,
}\[
 \DIFadd{\dim_{\CC}A^*(V)=\mu(V)-1.
}\]
\DIFaddend When $\mu(V)=1$, the  \DIFaddbegin \DIFadd{Milnor algebra is $\CC$, its socle is the whole
algebra, and $A^*(V)$ is the }\DIFaddend zero algebra.

\subsection{\DIFaddbegin \DIFadd{Derivations and the Yau }\DIFaddend algebras}

Let $R$ be a commutative $\CC$-algebra and let $N$ be an $R$-module.  A
$\CC$-linear map $D:R\to N$ is a derivation if \DIFaddbegin \DIFadd{it satisfies the Leibniz
rule
}\DIFaddend \[
 D(ab)=aD(b)+bD(a)
\]
for all $a,b\in R$.   \DIFaddbegin \DIFadd{In particular, $D(1)=0$.  We write
$\Der_{\CC}(R,N)$ for }\DIFaddend the vector space of such maps\DIFaddbegin \DIFadd{.  }\DIFaddend Equivalently,
\[
 \Der_{\CC}(R,N)\cong
 \operatorname{Hom}_R(\Omega_{R/\CC},N),
\]
where $\Omega_{R/\CC}$ is the module of K\"ahler differentials; see
\cite[Section~16.1]{Eisenbud1995}.

 \DIFaddbegin \DIFadd{For a concrete presentation, let $P=\CC[x_1,\ldots,x_n]$ and
$R=P/I$.  A derivation $D:R\to N$ is determined by the elements
}\[
 \DIFadd{u_j=D(\ol{x_j})\in N,\qquad 1\le j\le n.
}\]
\DIFadd{For every relation $g\in I$, these elements must satisfy
}\[
 \DIFadd{0=D(\ol g)
   =\sum_{j=1}^n
     \ol{\frac{\partial g}{\partial x_j}}\,u_j.
}\]
\DIFadd{Conversely, any tuple $(u_1,\ldots,u_n)$ satisfying these relations
defines a derivation.  In particular, when $I=J_f$, differentiating the
relations $\partial f/\partial x_i$ produces the Hessian matrix.  This is
the presentation of relative derivations used in Section~4.
}

\DIFadd{When $N=R$, the commutator
}\[
 [\DIFadd{D_1,D_2}]\DIFadd{=D_1D_2-D_2D_1
}\]
\DIFadd{is again a derivation.  It therefore makes $\Der_{\CC}(R,R)$ a Lie
algebra.  If $R$ is }\DIFaddend finite-dimensional\DIFaddbegin \DIFadd{, then so is this Lie algebra.
Moreover, an algebra isomorphism $\alpha:R\to S$ induces a Lie algebra
isomorphism
}\[
 \DIFadd{\Der_{\CC}(R,R)\longrightarrow\Der_{\CC}(S,S),
 \qquad
 D\longmapsto\alpha D\alpha^{-1}.
}\]\DIFaddend 
 \DIFaddbegin 

\DIFadd{For the moduli algebra, this construction gives the Yau algebra
}\[
 \DIFadd{L(V)=\Der_{\CC}(A(V),A(V)),
 \qquad
 \lambda(V)=\dim_{\CC}L(V).
}\]
\DIFadd{Yau proved that $L(V)$ is solvable for every isolated hypersurface
singularity }\cite{Yau1986,Yau1991}\DIFadd{.  The corresponding objects for the
generalized moduli algebra are the new Yau algebra and its dimension:
}\[
 \DIFadd{L^*(V)=\Der_{\CC}(A^*(V),A^*(V)),
 \qquad
 \lambda^*(V)=\dim_{\CC}L^*(V).
}\]
\DIFaddbegin \DIFadd{When
$A^*(V)$ is the zero algebra, we use the convention
}\DIFaddend $\Der_{\CC}(0,0)=0$.

If $I\subset R$ is an ideal and $\pi:R\to R/I$ is the quotient
map, composition with $\pi$ identifies
\[
 \Der_{\CC}(R/I,R/I)
 \cong
 \{D\in\Der_{\CC}(R,R/I):D(I)=0\}.
\]
\DIFadd{Thus a derivation from $R$ to $R/I$ descends to the quotient precisely
when it annihilates the defining ideal.
}\DIFaddend 

\DIFaddbegin \subsection{\DIFadd{The generalized moduli algebra and contact invariance}}

\DIFadd{The definition of $A^*(V)$ has a natural complete-intersection
interpretation in the quasi-homogeneous case.  The partial derivatives of
$f$ define the zero-dimensional complete intersection
}\[
 \DIFadd{X_f=\Spec M(V).
}\]
\DIFadd{Its Jacobian matrix is the Hessian matrix of $f$.  Hence the singular
subspace of $X_f$ is defined by the Jacobian ideal together with the
Hessian determinant, and its coordinate algebra is
}\[
 \DIFadd{\OO_n/(J_f,h_f)=A^*(V),
}\]
\DIFadd{where the last equality uses $f\in J_f$.  Dimca's theorem states that a
zero-dimensional isolated complete intersection is determined by its
singular subspace }\cite{Dimca1984}\DIFadd{.  Together with the Mather--Yau
theorem, this gives the analytic invariance for $A^*(V)$ within the
quasi-homogeneous class recorded in
}\cite[Remark~2.1]{ChenHussainYauZuo2020}\DIFadd{.
}

\DIFaddend Two germs $f,g\in\OO_n$ are \emph{right equivalent} if
$g=f\circ\phi$ for an analytic coordinate change
$\phi:(\CC^n,0)\to(\CC^n,0)$.  They are \emph{contact equivalent} if
\[
 g=u\,(f\circ\phi)
\]
for some unit $u\in\OO_n^\times$. 

For $1\le k\le n$, let $\mathfrak h_k(g)$ be the ideal generated by the
$k\times k$ minors of $\Hess(g)$ and set
\[
 H_k(g)=\OO_n/\bigl((g)+J_g+\mathfrak h_k(g)\bigr).
\]
Dimca and Sticlaru proved that the isomorphism class of $H_k(g)$ is
invariant under contact equivalence
\cite[Lemma~2.1]{DimcaSticlaru2015}.  Since
$\mathfrak h_n(g)=(h_g)$, the generalized moduli algebra is the top
local Hessian algebra:
\[
 A^*(V(g))=H_n(g).
\]
We record this case explicitly because the transformation formula also
proves the invariance of the Hessian corank used in
\cref{thm:main-intro}.
\begin{proposition}\label{prop:contact-invariance}
Let $f,g\in\m^2\subset\OO_n$ define isolated hypersurface singularities.
Assume that
\[
 g=u\,(f\circ\phi),
\]
where $u\in\OO_n^\times$ and
$\phi:(\CC^n,0)\to(\CC^n,0)$ is an analytic coordinate change.  Then
\[
 A(V(f))\cong A(V(g)),\qquad
 A^*(V(f))\cong A^*(V(g)).
\]
Moreover,
\[
 \corank\Hess(f)(0)=\corank\Hess(g)(0).
\]
\end{proposition}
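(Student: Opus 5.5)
The plan is to compute how each ingredient transforms under the contact equivalence $g=u\,(f\circ\phi)$, and to compare the ideals $(g)+J_g+(h_g)$ and $\phi^*\bigl((f)+J_f+(h_f)\bigr)$. Write $\phi^*:\OO_n\to\OO_n$, $a\mapsto a\circ\phi$, for the induced automorphism, and $D\phi$ for the Jacobian matrix of $\phi$, which is invertible at $0$. The first isomorphism, $A(V(f))\cong A(V(g))$, is the standard contact invariance of the Tjurina algebra. By the chain rule,
\[
 \partial_i g=(\partial_i u)(f\circ\phi)+u\sum_k (\partial_k f\circ\phi)\,\partial_i\phi_k .
\]
Since $D\phi$ is invertible, this shows $(g)+J_g=\phi^*\bigl((f)+J_f\bigr)$, so $\phi^*$ induces the isomorphism.

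For the second isomorphism, the cleanest route is to invoke \cite[Lemma~2.1]{DimcaSticlaru2015} with $k=n$, since $A^*(V(g))=H_n(g)$. To keep the argument self-contained, I would instead verify directly that $h_g\equiv v\cdot\phi^*(h_f)$ modulo $\phi^*\bigl((f)+J_f\bigr)$ for a unit $v$. Differentiating the displayed formula again gives
\[
 \Hess(g)=u\,(D\phi)^{T}\,\bigl(\Hess(f)\circ\phi\bigr)\,D\phi+E,
\]
where every entry of $E$ lies in the ideal $\phi^*\bigl((f)+J_f\bigr)$. Each entry of $E$ contains a factor $f\circ\phi$ or $\partial_kf\circ\phi$: the terms are $(\partial_{ij}u)(f\circ\phi)$, terms with $\partial u$ times first derivatives of $f\circ\phi$, and terms $u\sum_k(\partial_kf\circ\phi)\partial_{ij}\phi_k$. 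Expanding the determinant multilinearly, every term involving a column of $E$ lies in that ideal. Hence
\[
 h_g\equiv u^n\det(D\phi)^2\,\phi^*(h_f)
\]
modulo $\phi^*\bigl((f)+J_f\bigr)=(g)+J_g$. The coefficient $u^n\det(D\phi)^2$ is a unit, so $(g)+J_g+(h_g)=\phi^*\bigl((f)+J_f+(h_f)\bigr)$, and $\phi^*$ descends to an isomorphism $A^*(V(f))\to A^*(V(g))$.

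For the corank, evaluate the Hessian identity at $0$. Because $f\in\m^2$, we have $f(0)=0$ and $\partial_kf(0)=0$, and $\phi(0)=0$, so $E(0)=0$. Therefore
\[
 \Hess(g)(0)=u(0)\,(D\phi(0))^T\,\Hess(f)(0)\,D\phi(0)
\]
is a congruence by an invertible matrix, scaled by the nonzero constant $u(0)$, and the rank is preserved.

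The only delicate step is the bookkeeping showing that every entry of $E$ lies in $\phi^*\bigl((f)+J_f\bigr)$. This is a direct second-order chain-rule computation, so I expect no real obstruction.
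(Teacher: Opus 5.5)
Your proof is correct and follows the paper's route. The chain rule identifies $(g)+J_g$ with $\phi^*\bigl((f)+J_f\bigr)$, the second-order Hessian transformation formula makes $h_g$ a unit multiple of $\phi^*(h_f)$ modulo that ideal, and evaluating at $0$ gives a congruence of Hessians that preserves corank. The only difference is cosmetic: the paper treats $g=f\circ\phi$ and $g=uf$ separately, with factors $(\det J_\phi)^2$ and $u^n$, while you handle $g=u\,(f\circ\phi)$ in one computation with combined factor $u^n\det(D\phi)^2$.
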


\begin{proof}
We first consider $g=f\circ\phi$.  Let $J_\phi$ be the Jacobian matrix of
$\phi$.  The chain rule gives
\[
 \nabla g=J_\phi^{t}(\nabla f)\circ\phi.
\]
Since $J_\phi$ is invertible over $\OO_n$, pullback by $\phi$ identifies the
Jacobian ideals.  A second differentiation gives
\begin{equation}\label{eq:hessian-coordinate}
 \Hess(g)=J_\phi^{t}(\Hess(f)\circ\phi)J_\phi
 +\sum_{k=1}^n(f_k\circ\phi)\Hess(\phi_k),
\end{equation}
where $f_k=\partial f/\partial x_k$.  The entries of the second term lie in
$J_g$.  Taking determinants modulo $J_g$ gives
\[
 h_g\equiv(\det J_\phi)^2(h_f\circ\phi)\pmod{J_g}.
\]
The factor $(\det J_\phi)^2$ is a unit.  This proves the two algebra
isomorphisms under a coordinate change.  Since $\nabla f(0)=0$,
evaluation at the origin gives
\[
 \Hess(g)(0)=J_\phi(0)^t\Hess(f)(0)J_\phi(0),
\]
so the corank is unchanged.

Now let $g=uf$ with $u$ a unit.  The equality
\[
 \nabla g=u\nabla f+f\nabla u
\]
implies
\[
 (g,J_g)=(f,J_f).
\]
Also,
\begin{equation}\label{eq:hessian-unit}
 \Hess(g)=u\Hess(f)+f\Hess(u)
 + (\nabla u)(\nabla f)^t+(\nabla f)(\nabla u)^t.
\end{equation}
Modulo $(f,J_f)$, this gives
\[
 h_g\equiv u^n h_f.
\]
Since $u^n$ is a unit, the generalized moduli algebras are isomorphic.  At
the origin,
\[
 \Hess(g)(0)=u(0)\Hess(f)(0),
\]
so the corank is again unchanged.
\end{proof}

\DIFaddbegin \DIFadd{
The splitting lemma shows
that the Hessian corank is exactly the number of residual variables left
after the nondegenerate quadratic variables have been removed.  This is
why the statement of \cref{thm:main-intro} is naturally organized by
Hessian corank.
}


\section{Socles and the non-quasi-homogeneous case}

Let $(R,\n)$ be a finite-dimensional local $\CC$-algebra.  The next two
facts about its socle are standard.  We include their short proofs because
they are used at the central step of the argument.  General background on
Artinian Gorenstein rings and socles can be found in
\cite[Chapter~3]{BrunsHerzog1993}.

\begin{lemma}\label{lem:nonzero-ideal-meets-socle}
Every nonzero ideal of $R$ meets $\Soc(R)$ nontrivially.  If $R$ is
Artinian Gorenstein, then every nonzero ideal contains $\Soc(R)$.
\end{lemma}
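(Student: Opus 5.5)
The plan is to use nilpotency of the maximal ideal for the first claim, and the one-dimensionality of the socle for the second. Let $I\subseteq R$ be a nonzero ideal. Since $R$ is a finite-dimensional local $\CC$-algebra, its maximal ideal $\n$ is nilpotent. This follows either from Nakayama's lemma applied to the descending chain $\n\supseteq\n^2\supseteq\cdots$, which stabilizes by finite dimensionality, or from the fact that the Jacobson radical of an Artinian ring is nilpotent. So there is an $N$ with $\n^{N}=0$. We then consider the chain
\[
 I\supseteq \n I\supseteq \n^2 I\supseteq\cdots\supseteq \n^{N}I=0 .
\]
Because $I=\n^0I\neq0$ and the chain ends at $0$, there is a largest $k\ge0$ with $\n^kI\neq0$. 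Then $\n\cdot\n^kI=\n^{k+1}I=0$, so $\n^kI\subseteq\Soc(R)$. Also $\n^kI\subseteq I$, since $I$ is an ideal. Hence $0\neq\n^kI\subseteq I\cap\Soc(R)$, which proves the first statement.

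For the Gorenstein statement, we use that an Artinian local ring $(R,\n)$ is Gorenstein exactly when its socle is one-dimensional over the residue field $R/\n\cong\CC$. This is the standard characterization in \cite[Chapter~3]{BrunsHerzog1993}, and it is also how the Gorenstein property of $M(V)$ was used in \cref{eq:milnor-socle}. So $\Soc(R)=\CC s$ for some $s\neq0$. By the first part, $I\cap\Soc(R)$ is a nonzero subspace of the one-dimensional space $\Soc(R)$, so it equals $\Soc(R)$. Therefore $\Soc(R)\subseteq I$.

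The only step requiring any care is the nilpotency of $\n$ and the identification "Gorenstein $\Leftrightarrow$ $\dim_{\CC}\Soc(R)=1$" in the Artinian local setting. Both are standard and may be cited. If one prefers to avoid the citation, it suffices for the applications to note that $M(V)$ is a complete intersection, whose socle is one-dimensional by \cref{eq:milnor-socle}.
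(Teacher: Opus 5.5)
Your proof is correct and follows essentially the same argument as the paper. You use nilpotency of $\n$ to pick the largest $k$ with $\n^kI\neq0$, so that $0\neq\n^kI\subseteq I\cap\Soc(R)$, and then one-dimensionality of the socle of an Artinian Gorenstein local ring forces $\Soc(R)\subseteq I$; your Nakayama justification of nilpotency is just a detail the paper leaves implicit.
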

\begin{proof}
Let $I \subseteq R$ be a nonzero ideal. Since $R$ is an Artinian local ring, its maximal ideal $\mathfrak{m}$ is nilpotent. Thus, there exists a well-defined maximum integer $r \ge 0$ such that $\mathfrak{m}^r I \neq 0$. By the maximality of $r$, we have $\mathfrak{m}^{r+1} I = 0$, which means $\mathfrak{m}(\mathfrak{m}^r I) = 0$. Consequently, the nonzero submodule $\mathfrak{m}^r I$ is annihilated by $\mathfrak{m}$, implying that $0 \neq \mathfrak{m}^r I \subseteq \operatorname{Soc}(R)$. This proves the first assertion.

If $R$ is additionally Gorenstein, $\operatorname{Soc}(R)$ is a one-dimensional vector space over the residue field $R/\mathfrak{m}$. Since $\mathfrak{m}^r I$ is a nonzero subspace of $\operatorname{Soc}(R)$, it must coincide with the entire socle. Hence, $\operatorname{Soc}(R) = \mathfrak{m}^r I \subseteq I$, completing the proof.
\end{proof}

\begin{lemma}[Socle preservation]\label{lem:socle-preservation}
Let $(R,\n)$ be a finite-dimensional local $\CC$-algebra.  Every
$D\in\Der_{\CC}(R,R)$ satisfies
\[
 D(\n)\subseteq\n,
 \qquad
 D(\Soc(R))\subseteq\Soc(R).
\]
\end{lemma}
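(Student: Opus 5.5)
The plan is to prove the two inclusions in turn: the first by an eigenvalue argument using the splitting $R=\CC\cdot1\oplus\n$, the second as a formal consequence of the first and the Leibniz rule.

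\textbf{Step 1: $D(\n)\subseteq\n$.} Since $R$ is finite-dimensional and local with residue field $\CC$, we have $R=\CC\cdot 1\oplus\n$, and every element of $\n$ is nilpotent. Fix $a\in\n$ and let $k\ge1$ be minimal with $a^k=0$. Write $D(a)=c+b$ with $c\in\CC$ and $b\in\n$; it suffices to show $c=0$. If $k=1$ then $a=0$ and $D(a)=0$, so assume $k\ge2$. By the Leibniz rule, $D(a^k)=k\,a^{k-1}D(a)=0$, hence
\[
 k\,a^{k-1}(c+b)=0 .
\]
The element $c+b$ is a unit when $c\neq0$, since a nonzero scalar plus a nilpotent is invertible. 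In that case $a^{k-1}=0$, contradicting the minimality of $k$. Therefore $c=0$ and $D(a)\in\n$.

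\textbf{Step 2: $D(\Soc(R))\subseteq\Soc(R)$.} Let $s\in\Soc(R)$ and $x\in\n$, so that $xs=0$. Applying $D$ gives
\[
 0=D(xs)=x\,D(s)+s\,D(x).
\]
By Step 1, $D(x)\in\n$, and $s\n=0$, so $s\,D(x)=0$. Hence $x\,D(s)=0$ for every $x\in\n$, which says exactly that $D(s)\in(0:_R\n)=\Soc(R)$.

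The only step that needs real care is Step 1. An alternative route is to note that $D$ induces a derivation on the residue field $R/\n\cong\CC$, and any $\CC$-linear derivation of $\CC$ is zero. However, making that descent rigorous already presupposes $D(\n)\subseteq\n$, so the nilpotency argument above is the cleaner choice.
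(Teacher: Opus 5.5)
Your proposal is correct and follows essentially the same argument as the paper. Both rule out $D(a)$ being a unit via $0=D(a^k)=k\,a^{k-1}D(a)$ with $k$ minimal; your splitting $D(a)=c+b$ is only a rephrasing of that step. The socle inclusion then follows from the Leibniz rule exactly as you wrote.
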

\begin{proof}
This is the local commutative case of Hochschild's theorem that the radical of a finite-dimensional algebra over a field of characteristic zero is a characteristic ideal \cite[Theorem 4.2]{Hochschild1942}. We give a direct, self-contained proof.

Let us first show that $D(\mathfrak{n}) \subseteq \mathfrak{n}$. Since $R$ is a finite-dimensional local $\mathbb{C}$-algebra, it is an Artinian local ring. Therefore, its unique maximal ideal $\mathfrak{n}$ coincides with its nilradical, which implies that every element $a \in \mathfrak{n}$ is nilpotent.

 Let $a\in\n$, and suppose that $D(a)$
is a unit.  Choose the least $r\ge1$ with $a^r=0$.  Since the
characteristic is zero,
\[
 0=D(a^r)=r a^{r-1}D(a).
\]
Cancelling the unit $D(a)$ gives $a^{r-1}=0$, a contradiction.  Thus
$D(a)\in\n$.

Now let $s\in\Soc(R)$ and $a\in\n$.  Since $as=0$,
\[
 aD(s)=D(as)-D(a)s=0.
\]
The last term is zero because $D(a)\in\n$.  Hence $D(s)$ is annihilated by
$\n$ and lies in the socle.
\end{proof}

\begin{proposition}[The non-quasi-homogeneous case]\label{prop:non-qh}
If $f$ is not quasi-homogeneous, then
\[
 A^*(V)=A(V).
\]
Consequently,
\[
 L^*(V)=L(V),\qquad \lambda^*(V)=\lambda(V).
\]
\end{proposition}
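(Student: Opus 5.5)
The plan is to show that $h_f\in(f,J_f)$. Once this holds, the ideals $(f,J_f,h_f)$ and $(f,J_f)$ coincide in $\OO_n$, so $A^*(V)=A(V)$ as quotient algebras. The equalities $L^*(V)=L(V)$ and $\lambda^*(V)=\lambda(V)$ then follow immediately from the definitions.

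The key input is Saito's theorem \cite{Saito1974}: for an isolated hypersurface singularity, $f\in J_f$ if and only if $f$ is quasi-homogeneous, i.e.\ after a coordinate change it is weighted homogeneous. Since $f$ is not quasi-homogeneous, $f\notin J_f$, so $\ol f\neq0$ in the Milnor algebra $M(V)=M_f$. By the setup in Section~2, $M(V)$ is an Artinian Gorenstein (complete intersection) local algebra. By the Scheja--Storch identification \eqref{eq:milnor-socle}, its socle is one-dimensional and spanned by $\ol{h_f}$.

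Apply \cref{lem:nonzero-ideal-meets-socle} to the nonzero ideal $(\ol f)\subseteq M(V)$. Because $M(V)$ is Gorenstein, this ideal contains $\Soc(M(V))=\CC\ol{h_f}$. Hence $\ol{h_f}\in(\ol f)$ in $M(V)$. Lifting to $\OO_n$ gives $h_f\in(f)+J_f$. This is exactly \cite[Corollary~3.8]{Saito1974}, rederived here from the socle lemma.

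The only delicate point is the appeal to Saito's characterization, specifically the direction stating that $f\in J_f$ forces quasi-homogeneity. I would quote it rather than reprove it. I would also remark that $\mu\ge1$ always, so $M(V)\neq0$ and the socle argument applies. If $\mu=1$, then $f$ is Morse, which is quasi-homogeneous, so this case does not arise under the hypothesis.
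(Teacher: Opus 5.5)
Your proposal is correct and matches the paper's proof essentially step for step. Saito's criterion gives $\ol f\neq0$ in $M(V)$. \cref{lem:nonzero-ideal-meets-socle}, applied in the Gorenstein algebra $M(V)$, puts the Scheja--Storch socle $\CC\ol{h_f}$ inside $(\ol f)$, so $(\ol f,\ol h_f)=(\ol f)$ and the two quotient algebras coincide. Your side remark on $\mu=1$ is harmless, since the Morse case is quasi-homogeneous and is therefore excluded by the hypothesis.
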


\begin{proof}
This conclusion was already recorded in
\cite[p.~441]{ChenHussainYauZuo2020}, using
\cite[Corollary~3.8]{Saito1974}.  We give the equivalent socle argument.

By Saito's criterion, $f\in J_f$ if and only if $(V,0)$ is
quasi-homogeneous \cite{Saito1971}.  Thus
$\ol f\ne0$ in $M(V)$ in the present case.  By
\cref{lem:nonzero-ideal-meets-socle}, the nonzero ideal $(\ol f)$
contains the one-dimensional socle of $M(V)$.  In view of
\eqref{eq:milnor-socle},
\[
 \ol h_f\in(\ol f).
\]
Therefore $(\ol f,\ol h_f)=(\ol f)$ in $M(V)$, and the two quotient
algebras are equal.
\end{proof}

\section{The weighted homogeneous case}
Throughout this section, let
$P=\CC[x_1,\ldots,x_n]$, where $n\ge2$, and let $f\in P$ be weighted
homogeneous of degree $d$ with positive integer weights
$w_1,\ldots,w_n$.  Assume that the origin is an isolated critical point
of $f$ and that $\mult(f)\ge3$.  The Jacobian ideal $J_f$ is then
$\m$-primary, so $P/J_f$ agrees naturally with the analytic Milnor
algebra $\OO_n/J_f\OO_n$.  Set
\[
 M=P/J_f,\qquad
 H=\Hess(f),\qquad
 h=\det H,\qquad
 B=M/(\ol h).
\]
Euler's identity and \eqref{eq:milnor-socle} identify these algebras as
\[
 M=A(V),\qquad B=A^*(V)=M/\Soc(M).
\]

The partial derivatives of $f$ form a homogeneous regular sequence of
degrees $d-w_1,\ldots,d-w_n$.  Set
\[
 W=\sum_{i=1}^n w_i,
 \qquad
 \sigma=nd-2W.
\]
The standard Hilbert-series formula for a graded complete intersection is
\begin{equation}\label{eq:hilbert-series-M}
\operatorname{Hilb}_M(t)=
 \prod_{i=1}^n\frac{1-t^{d-w_i}}{1-t^{w_i}}.
\end{equation}
It follows that
\begin{equation}\label{eq:top-degree}
 M_q=0\quad(q>\sigma),
 \qquad
 M_\sigma=\CC\ol h,
 \qquad
 B_q=0\quad(q\ge\sigma).
\end{equation}

The adjugate-Hessian derivations and the degree estimate below occur
already in the proof of \cite[Theorem~C]{ChenHussainYauZuo2020}.  Our
purpose is to place that calculation in an exact sequence and then
analyze the obstruction to descending a relative derivation to $B$.

\subsection{Relative derivations and the snake lemma}

If $N$ is an $M$-module, let $\Der_{\CC}(M,N)$ denote the space of
$\CC$-derivations from $M$ to $N$.

\begin{lemma}\label{lem:relative-kernel}
There are natural identifications
\[
 \Der_{\CC}(M,M)\cong\ker(H:M^n\to M^n),
\]
and
\[
 \Der_{\CC}(M,B)\cong\ker(H:B^n\to B^n).
\]
\end{lemma}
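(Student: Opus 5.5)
The plan is to apply the concrete presentation of derivations from Section~2 to $M=P/J_f$, with target first $N=M$ and then $N=B$.

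A derivation $D:M\to N$ is determined by the tuple $u=(u_1,\ldots,u_n)$, where $u_j=D(\ol{x_j})\in N$. The conditions are $\sum_j\ol{\partial g/\partial x_j}\,u_j=0$ for every $g\in J_f$. Conversely, any tuple satisfying these relations defines a derivation $M\to N$, by precomposing with $P\to M$.

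The first step is to reduce the relations to the generators $f_i=\partial f/\partial x_i$. If $g=\sum_i a_if_i$ with $a_i\in P$, then
\[
\frac{\partial g}{\partial x_j}=\sum_i a_i\frac{\partial f_i}{\partial x_j}+\sum_i f_i\frac{\partial a_i}{\partial x_j}.
\]
The second sum lies in $J_f$, and $J_f$ acts as zero on any $M$-module $N$. Hence the condition for $g$ is an $M$-linear combination of the conditions for the generators $f_i$. The relevant conditions are therefore exactly
\[
\sum_j\ol{\frac{\partial^2 f}{\partial x_i\partial x_j}}\,u_j=0,\qquad i=1,\ldots,n,
\]
that is, $Hu=0$ in $N^n$, where $H$ acts through its image in $M$.

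Taking $N=M$ gives $\Der_{\CC}(M,M)\cong\ker(H:M^n\to M^n)$. Taking $N=B$, which is an $M$-module since $B=M/(\ol h)$, gives the second identification $\Der_{\CC}(M,B)\cong\ker(H:B^n\to B^n)$.

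Naturality means the identifications commute with the map $M^n\to B^n$ induced by the quotient, and the map $\Der_{\CC}(M,M)\to\Der_{\CC}(M,B)$ given by composing with $\pi$. Both are reductions of the tuple $u$, so this compatibility is immediate. It is what the later snake-lemma argument will use.

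The only real subtlety is the well-definedness and surjectivity of the correspondence, that is, that a tuple satisfying the relations really yields a derivation of $M$. This follows from the universal property: the $\CC$-algebra derivation $P\to N$ with $x_j\mapsto u_j$ exists and is unique. It factors through $M$ exactly when it kills $J_f$. By the Leibniz rule and the computation above, it suffices that it kills the generators $f_i$. Since $J_f$ is generated by the polynomials $f_i$, no analytic issues arise.
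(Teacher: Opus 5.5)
Your proof is correct and follows the same route as the paper: encode a derivation $D:M\to N$ by $u_j=D(\ol{x_j})$ and observe that the relations $f_i=0$ translate into $Hu=0$. You also spell out two points the paper leaves implicit. The first is the converse: a tuple in the kernel of $H$ defines a derivation, because by the Leibniz rule and the universal derivation on $P$ it suffices to kill the generators $f_i$. The second is the compatibility of the two identifications with $\pi^n$, which the later snake-lemma argument relies on.
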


\begin{proof}
A derivation $D:M\to N$ is determined by
\[
 u_j=D(\ol{x_j}),\qquad 1\le j\le n.
\]
The relations $f_i=\partial f/\partial x_i=0$ give
\[
 0=D(\ol{f_i})=\sum_{j=1}^n\ol{f_{ij}}u_j.
\]
Thus the column vector $(u_1,\ldots,u_n)^t$ lies in the kernel of the
Hessian matrix.  Conversely, any vector in this kernel defines a
derivation on the quotient.
\end{proof}

\subsection{The Cramer derivations}

We now compare the two kernels
\[
 \ker(H:M^n\to M^n)
 \qquad\text{and}\qquad
 \ker(H:B^n\to B^n),
\]
where
\[
 H=\operatorname{Hess}(f)
\]
is viewed as a matrix with entries in \(M\), and also as a matrix with
entries in \(B\) after reduction modulo \((\overline h)\).

By the result of Scheja and Storch \cite{SchejaStorch1975},
\[
 \Soc(M)=\CC\ol h.
\]

Set
\[
 \Sigma=\Soc(M)^{\oplus n}
        =(\CC\ol h)^{\oplus n}
        \subset M^n.
\]

\begin{proposition}
\label{prop:relative-cramer}
There is an exact sequence of finite-dimensional complex vector spaces
\begin{equation}\label{eq:relative-cramer}
 0\longrightarrow\Sigma
 \longrightarrow\Der_{\CC}(M,M)
 \longrightarrow\Der_{\CC}(M,B)
 \xrightarrow{\partial}\Sigma
 \longrightarrow0.
\end{equation}
In particular,
\[
 \dim_{\CC}\Der_{\CC}(M,B)=\dim_{\CC}\Der_{\CC}(M,M).
\]

Moreover, for \(p=1,\ldots,n\), let \(D_p\in\Der_{\CC}(M,B)\) be the
relative derivation defined by
\begin{equation}\label{eq:Cramer-derivation}
 D_p(\ol{x_j})
 =
 \pi\bigl(\ol{\operatorname{adj}(H)_{jp}}\bigr),
 \qquad
 1\le j\le n,
\end{equation}
where \(\pi:M\twoheadrightarrow B\) is the quotient map. Then, with the
usual convention for the connecting map,
\begin{equation}\label{eq:boundary-Cramer}
 \partial(D_p)=\ol h\,e_p,
\end{equation}
where \(e_p\) is the \(p\)-th standard basis vector of \(M^n\). Hence the
classes of
\[
 D_1,\ldots,D_n
\]
span the quotient of \(\Der_{\CC}(M,B)\) by the image of
\(\Der_{\CC}(M,M)\).
\end{proposition}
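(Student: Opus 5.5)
Apply the snake lemma to the short exact sequence of complexes obtained from
\[
 0\longrightarrow\Soc(M)\longrightarrow M\xrightarrow{\ \pi\ }B\longrightarrow0
\]
by taking $n$-fold direct sums and letting $H$ act on each term. This gives the commutative diagram with exact rows
\[
 0\to\Sigma\to M^n\to B^n\to0,
\]
with vertical maps given by $H$ on each of $\Sigma$, $M^n$ and $B^n$. Note that $H$ maps $\Sigma$ into $\Sigma$. By \cref{lem:relative-kernel} the kernels of the middle and right vertical maps are $\Der_{\CC}(M,M)$ and $\Der_{\CC}(M,B)$.

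On $\Sigma=(\CC\ol h)^{\oplus n}$, the action of $H$ is multiplication of $\ol h\,v$ (with $v\in\CC^n$) by $\ol{H}$. Since $\ol h$ is killed by $\m$ and $\mult(f)\ge3$, every entry $f_{ij}$ lies in $\m$. Hence $H$ acts by zero on $\Sigma$, so its kernel and cokernel are both $\Sigma$. The snake lemma then yields
\[
 0\to\Sigma\to\Der(M,M)\to\Der(M,B)\xrightarrow{\partial}\Sigma\to\coker(H|_{M^n})\to\coker(H|_{B^n})\to0.
\]
To obtain \eqref{eq:relative-cramer} we must show that $\partial$ is surjective. Equivalently, $\Sigma\to\coker(H:M^n\to M^n)$ is zero, which means $\Sigma\subseteq H\cdot M^n$.

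Surjectivity follows from the explicit formula \eqref{eq:boundary-Cramer}. First I check that $D_p$ is indeed a relative derivation. The identity $H\operatorname{adj}(H)=h\,I$ shows that $H$ applied to the $p$-th column of $\operatorname{adj}(H)$ gives $h e_p$, which vanishes in $B^n$. So this column reduces to an element of $\ker(H:B^n\to B^n)$. Next I compute $\partial$ by the usual recipe: lift the column to $M^n$ as $\ol{\operatorname{adj}(H)_{\bullet p}}$ and apply $H$. The result $\ol h e_p$ lies in $\Sigma$, so $\partial(D_p)=\ol h e_p$. These elements span $\Sigma$, so $\partial$ is onto. This also proves the final assertion: the $D_p$ span $\Der(M,B)/\im\Der(M,M)\cong\Sigma$.

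The dimension equality now follows by counting in the four-term exact sequence. Its alternating sum of dimensions vanishes, giving
\[
 \dim\Sigma-\dim\Der(M,M)+\dim\Der(M,B)-\dim\Sigma=0.
\]
Finite-dimensionality is automatic because $M$ is finite-dimensional.

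The main point is to identify where the hypotheses are used. $\mult(f)\ge3$ enters only to make $H$ vanish on $\Sigma$, and the adjugate identity supplies the surjectivity. Finally, I would make sure that the sign and convention of the connecting map match the statement: lift, apply $H$, then pull back into $\Sigma$.
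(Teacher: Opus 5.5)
Your proposal is correct and follows essentially the same route as the paper. Both arguments apply the snake lemma to $0\to\Sigma\to M^n\to B^n\to0$ with $H$ acting vertically, use $\mult(f)\ge3$ to get $H\Sigma=0$, and use the adjugate identity $H\operatorname{adj}(H)=hI_n$ both to prove surjectivity of $\partial$ and to compute $\partial(D_p)=\ol h\,e_p$. The only difference is cosmetic: the paper first phrases surjectivity as $\Sigma\subseteq\operatorname{im}(H:M^n\to M^n)$, whereas you read it off directly from the explicit preimages $D_p$.
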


\begin{proof}
We use the Hessian matrix
\[
 H=\operatorname{Hess}(f)
\]
as a matrix over \(M\), and also as a matrix over \(B\) after reducing
modulo \((\ol h)\).
By \cref{lem:relative-kernel},
\[
 \Der_{\CC}(M,M)
 \cong
 \ker(H:M^n\to M^n)
\]
and
\[
 \Der_{\CC}(M,B)
 \cong
 \ker(H:B^n\to B^n).
\]
Under these identifications, the map
\[
 \Der_{\CC}(M,M)\longrightarrow \Der_{\CC}(M,B)
\]
is induced by the quotient map
\[
 \pi^n:M^n\longrightarrow B^n.
\]

Since
\[
 B=M/(\ol h)
\]
and
\[
 \Soc(M)=\CC\ol h,
\]
the kernel of \(\pi^n\) is exactly
\[
 \Sigma=(\CC\ol h)^{\oplus n}.
\]
Thus we have a short exact sequence
\[
 0\longrightarrow
 \Sigma
 \longrightarrow
 M^n
 \xrightarrow{\pi^n}
 B^n
 \longrightarrow0.
\]

The multiplicity assumption is used at this point.  Since
$\mult(f)\ge3$, every second partial derivative
\[
 f_{ij}=\frac{\partial^2 f}{\partial x_i\partial x_j}
\]
has no constant term.  Hence every entry of \(H\) lies in the maximal
ideal of \(M\).  The maximal ideal kills the socle, so
\[
 H\Sigma=0.
\]
Therefore \(H\) gives a commutative diagram with exact rows:
\[
\begin{tikzcd}
0 \arrow[r] &
\Sigma \arrow[r] \arrow[d,"0"'] &
M^n \arrow[r,"\pi^n"] \arrow[d,"H"] &
B^n \arrow[r] \arrow[d,"H"] &
0
\\
0 \arrow[r] &
\Sigma \arrow[r] &
M^n \arrow[r,"\pi^n"] &
B^n \arrow[r] &
0.
\end{tikzcd}
\]
The left vertical arrow is zero precisely because \(H\Sigma=0\).

Applying the snake lemma to this diagram gives an exact sequence
\[
\begin{aligned}
0\longrightarrow
\Sigma
&\longrightarrow
\ker(H:M^n\to M^n)
\longrightarrow
\ker(H:B^n\to B^n)
\\
&\xrightarrow{\partial}
\Sigma
\longrightarrow
\operatorname{coker}(H:M^n\to M^n)
\longrightarrow
\operatorname{coker}(H:B^n\to B^n).
\end{aligned}
\]
For completeness, the connecting map is as follows.  Take
\[
 \ol v\in\ker(H:B^n\to B^n)
\]
and choose a lift
\[
 v\in M^n.
\]
Since \(H\ol v=0\) in \(B^n\), we have
\[
 \pi^n(Hv)=0.
\]
Thus
\[
 Hv\in\ker(\pi^n)=\Sigma.
\]
The connecting map is
\[
 \partial(\ol v)=Hv.
\]
This is independent of the choice of lift.  Indeed, if \(v'\) is another
lift, then
\[
 v'-v\in\Sigma,
\]
and hence
\[
 H(v'-v)=0
\]
because \(H\Sigma=0\).

It remains to prove that this connecting map is surjective.

Cramer's rule gives
\[
 H\operatorname{adj}(H)=hI_n.
\]
Passing to \(M\) gives
\[
 H\operatorname{adj}(H)=\ol h\,I_n.
\]
Therefore, for the \(p\)-th standard basis vector \(e_p\in M^n\),
\[
 H\bigl(\operatorname{adj}(H)e_p\bigr)=\ol h\,e_p.
\]
The vectors
\[
 \ol h\,e_1,\ldots,\ol h\,e_n
\]
form a basis of \(\Sigma\). Hence
\[
 \Sigma\subseteq\operatorname{im}(H:M^n\to M^n).
\]
Equivalently, the map
\[
 \Sigma\longrightarrow
 \operatorname{coker}(H:M^n\to M^n)
\]
in the snake-lemma sequence is zero. Thus
\[
 \partial:
 \ker(H:B^n\to B^n)
 \longrightarrow
 \Sigma
\]
is surjective. The long exact sequence therefore truncates to
\[
0\longrightarrow
\Sigma
\longrightarrow
\ker(H:M^n\to M^n)
\longrightarrow
\ker(H:B^n\to B^n)
\xrightarrow{\partial}
\Sigma
\longrightarrow0.
\]
Using Lemma \ref{lem:relative-kernel} gives
\[
0\longrightarrow
\Sigma
\longrightarrow
\Der_{\CC}(M,M)
\longrightarrow
\Der_{\CC}(M,B)
\xrightarrow{\partial}
\Sigma
\longrightarrow0.
\]
Taking dimensions gives
\[
 \dim_{\CC}\Der_{\CC}(M,B)
 =
 \dim_{\CC}\Der_{\CC}(M,M).
\]

For \(p=1,\ldots,n\), let
\[
 c^{(p)}=\operatorname{adj}(H)e_p
 =
 \bigl(\operatorname{adj}(H)_{1p},\ldots,
       \operatorname{adj}(H)_{np}\bigr)^t.
\]
Define \(D_p\in\Der_{\CC}(M,B)\) by
\[
 D_p(\ol{x_j})
 =
 \pi\bigl(\ol{c^{(p)}_j}\bigr)
 =
 \pi\bigl(\ol{\operatorname{adj}(H)_{jp}}\bigr),
 \qquad j=1,\ldots,n.
\]

This is a well-defined relative derivation because
\[
 H\,\pi(\operatorname{adj}(H)e_p)
 =
 \pi(H\operatorname{adj}(H)e_p)
 =
 \pi(\ol h\,e_p)
 =0
\]
in \(B^n\).  If we lift \(D_p\) to the vector
\[
 \operatorname{adj}(H)e_p\in M^n,
\]
then the formula for the connecting map gives
\[
 \partial(D_p)
 =
 H\operatorname{adj}(H)e_p
 =
 \ol h\,e_p.
\]
Hence \(D_1,\ldots,D_n\) map to a basis of \(\Sigma\).  Their classes
therefore span the quotient of \(\Der_{\CC}(M,B)\) by the image of
\(\Der_{\CC}(M,M)\).
\end{proof}

\begin{definition}[Cramer derivations]
The relative derivations
\[
 D_1,\ldots,D_n\in\Der_{\CC}(M,B)
\]
constructed in Proposition \ref{prop:relative-cramer} are called the
\emph{Cramer derivations}, or \emph{adjugate-Hessian derivations}.
They are obtained from the columns of $\operatorname{adj}(H)$ and
satisfy
\[
 D_p(\ol{x_j})
 =
 \pi\bigl(\ol{\operatorname{adj}(H)_{jp}}\bigr),
 \qquad
 j=1,\ldots,n,
\]
and
\[
 \partial(D_p)=\ol h\,e_p.
\]
\end{definition}

\subsection{The descent obstruction}

A relative derivation $D:M\to B$ descends to a derivation of $B$ if and
only if it kills the kernel $(\ol h)$ of $M\twoheadrightarrow B$.  Define
\[
 \operatorname{ob}:\Der_{\CC}(M,B)\longrightarrow B,
 \qquad
 \operatorname{ob}(D)=D(\ol h).
\]

\begin{lemma}\label{lem:obstruction}
The image of $\operatorname{ob}$ is contained in $\Soc(B)$, and
\begin{equation}\label{eq:obstruction-exact}
 0\longrightarrow\Der_{\CC}(B,B)
 \longrightarrow\Der_{\CC}(M,B)
 \xrightarrow{\operatorname{ob}}\Soc(B)
\end{equation}
is exact.
\end{lemma}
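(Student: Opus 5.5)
Two things need proof: that $D(\ol h)$ always lies in $\Soc(B)$, and that the displayed sequence is exact.

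\textbf{Exactness.} We use the description from Section~2 of derivations of a quotient. Composition with $\pi:M\twoheadrightarrow B$ identifies $\Der_{\CC}(B,B)$ with the set of $D\in\Der_{\CC}(M,B)$ satisfying $D((\ol h))=0$. The map $D\mapsto D\circ\pi$ is injective because $\pi$ is surjective, which gives exactness at $\Der_{\CC}(B,B)$. For exactness in the middle, take $D\in\Der_{\CC}(M,B)$ and $a\in M$. The Leibniz rule gives
\[
D(a\ol h)=\pi(a)\,D(\ol h)+\pi(\ol h)\,D(a)=\pi(a)\,D(\ol h),
\]
since $\pi(\ol h)=0$. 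So $D$ kills the whole ideal $(\ol h)$ exactly when $D(\ol h)=0$. Hence the kernel of $\operatorname{ob}$ is precisely the image of $\Der_{\CC}(B,B)$.

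\textbf{The image lies in $\Soc(B)$.} We must show that $\pi(x)\,D(\ol h)=0$ for every $x$ in the maximal ideal of $M$. The same Leibniz computation gives
\[
\pi(x)D(\ol h)=D(x\ol h)-\pi(\ol h)D(x)=D(x\ol h).
\]
Since $\ol h$ spans $\Soc(M)$ by \eqref{eq:milnor-socle}, we have $x\ol h=0$ in $M$. Therefore $D(x\ol h)=D(0)=0$, and $D(\ol h)\in\Soc(B)$.

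The main point requiring care is the one-line reduction from $D((\ol h))=0$ to $D(\ol h)=0$, which uses $\pi(\ol h)=0$ in the $M$-module structure on $B$. I do not expect a genuine obstacle.

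One could also argue degree-wise: $\ol h$ has degree $\sigma$ and $B_q=0$ for $q\ge\sigma$ by \eqref{eq:top-degree}. This is not needed for the lemma, only possibly for later surjectivity questions.
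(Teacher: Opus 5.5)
Your proof is correct and follows the paper's argument: you get the socle containment from the Leibniz rule applied to $x\ol h=0$ for $x$ in the maximal ideal of $M$, and the kernel identification from the Section~2 description of derivations of a quotient. You also spell out two steps the paper leaves implicit: that $D(\ol h)=0$ forces $D$ to vanish on all of $(\ol h)$, since $D(a\ol h)=\pi(a)D(\ol h)$, and that $D\mapsto D\circ\pi$ is injective.
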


\begin{proof}
Let $\ol a$ lie in the maximal ideal of $M$.  Since
$\ol a\,\ol h=0$ in $M$, the Leibniz rule gives
\[
 0=D(\ol a\,\ol h)
 =\pi(\ol a)D(\ol h)+\pi(\ol h)D(\ol a)
 =\pi(\ol a)D(\ol h),
\]
where $D\in\Der_{\CC}(M,B)$.
Thus $D(\ol h)$ is annihilated by the maximal ideal of $B$ and lies in
$\Soc(B)$.  The kernel of $\operatorname{ob}$ consists exactly of relative
derivations that annihilate $(\ol h)$, hence exactly of derivations of $B$.
\end{proof}

Note that there is a natural map $\phi: \Der_{\CC}(M,M) \to \Der_{\CC}(M, B)$ given by the composition $\tilde{D} \mapsto \pi \circ \tilde{D}$.
By Lemma \ref{lem:socle-preservation}, every derivation of $M$ sends $\ol h$ to
a scalar multiple of $\ol h$.  Therefore  we have
\[
 \operatorname{ob}(\phi(\tilde{D})) = (\pi \circ \tilde{D})(\ol h) = \pi(c\ol h) = c\pi(\ol h) = 0,
\]
since $\ol h \in \ker(\pi)$.

Hence, evaluating the obstruction map on the entire image of $\phi$ identically yields
\begin{equation}\label{eq:ob-image-M-zero}
 \operatorname{ob}\bigl(\im\bigl(\phi: \Der_{\CC}(M,M) \to \Der_{\CC}(M,B)\bigr)\bigr) = 0.
\end{equation}
It remains to evaluate the obstruction on the Cramer derivations.

\subsection{The Cramer degrees}

Each entry $\operatorname{adj}(H)_{jp}$ of the adjugate Hessian matrix
is weighted homogeneous of degree
\[
 \sigma-d+w_j+w_p.
\]
Therefore $D_p$ is homogeneous of degree
\begin{equation}\label{eq:Cramer-degree}
 \epsilon_p=\sigma-d+w_p=(n-1)d-2W+w_p.
\end{equation}
This degree formula is the one used in the proof of
\cite[Theorem~C]{ChenHussainYauZuo2020}.

\begin{lemma}[Nonnegativity of the Cramer degrees]
\label{lem:Cramer-nonnegative}
For every $p=1,\ldots,n$, one has $\epsilon_p\ge0$.
\end{lemma}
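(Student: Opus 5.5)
The inequality $\epsilon_p\ge0$ is equivalent to
\[
 d-w_p\le\sigma ,
\]
and since $M_q=0$ for $q>\sigma$ by \eqref{eq:top-degree}, it suffices to show $M_{d-w_p}\ne0$. By Gorenstein duality in the graded complete intersection $M$, $\dim M_q=\dim M_{\sigma-q}$, so the statement is equivalent to $M_q\neq0$ for some $q\ge d-w_p$. I would therefore exhibit, for each $p$, an explicit monomial of weighted degree $d-w_p$ whose class in $M$ is nonzero.

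\textbf{Step 1 (choosing the monomial).} Use the structure of isolated weighted homogeneous germs. Since $0$ is an isolated critical point, for every index $p$ the polynomial $f$ contains a monomial $x_p^{a}$ or $x_p^{a}x_j$ with $j\ne p$; this is the standard Kouchnirenko/Saito condition. Since $\mult(f)\ge3$, we have $a\ge3$ in the first case and $a\ge2$ in the second. Either case gives a monomial $m$ of degree $d$ divisible by $x_p$, and hence a monomial $m/x_p$ of degree $d-w_p$. The goal is to show that some such monomial, or a monomial of the same or larger degree, survives modulo $J_f$.

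\textbf{Step 2 (the inequality directly).} As a backup, I would prove the equivalent numerical form directly. Rewrite
\[
 \epsilon_p=\sum_{i\ne p}(d-2w_i)-w_p .
\]
The monomials in Step 1 give the strict bound $2w_i<d$ for every $i$: indeed, $a w_i+w_j=d$ with $a\ge2$ and $w_j>0$, or $a w_i=d$ with $a\ge3$. For $n\ge2$, fix $p$ and use the monomial $x_k^{b}x_l$ (or $x_k^b$) attached to some index $k\ne p$. If this monomial involves $x_p$, for instance $l=p$, then
\[
 d-2w_k=(b-2)w_k+w_p\ge w_p ,
\]
and $\epsilon_p\ge0$ follows, because every other summand $d-2w_i$ is positive. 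The test example $x^2y+y^k$ shows that equality can occur, so the estimate is sharp and no cruder bound can work.

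\textbf{Step 3 (the remaining case).} The main obstacle is the case where none of the monomials attached to indices $k\ne p$ involve $x_p$. I would handle it by following the chain $p\to j\to\cdots$ given by the monomials $x_p^{a}x_j$. Along such a chain I would sum the relations
\[
 d-2w_k\ge(b_k-2)w_k+w_{l(k)} .
\]
The aim is a telescoping lower bound of the form $\sum_{i\ne p}(d-2w_i)\ge w_{\text{last}}$, and then the comparison $w_p\le w_{\text{last}}$ should follow from the relation $a w_p+w_j=d$. If the chain argument gets tangled, I would fall back on Step 1's algebraic route. That route computes $\dim M_{d-w_p}$ as $\dim P_{d-w_p}$ minus the dimension of $(J_f)_{d-w_p}$. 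It then shows that this difference is positive by comparing the number of available monomials of degree $d-w_p$ with the number of generators $f_i$ (times monomials) that can contribute in that degree.
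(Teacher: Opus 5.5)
\textbf{There is a genuine gap.} Step~2 correctly settles one special case: some $k\ne p$ has $x_k^b x_p$ as its attached monomial. The case you call the main obstacle is left as a plan, though. Moreover, both Step~1 and the fallback in Step~3 aim at a statement that is false in general. Take $f=x^4+y^3$ (type $E_6$), with weights $(w_x,w_y)=(3,4)$ and $d=12$. Then $J_f=(x^3,y^2)$, and the monomial basis $x^iy^j$ ($i\le2$, $j\le1$) of $M$ has degrees $0,3,4,6,7,10$. So $\sigma=10$ and $M_{d-w_x}=M_9=0$, while $\epsilon_x=1\ge0$. Exhibiting a nonzero class in degree $d-w_p$ therefore cannot work; your candidate $m/x_p=x^3$ even lies in $J_f$. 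Proving $\dim P_{d-w_p}-\dim(J_f)_{d-w_p}>0$ cannot work either. Reformulating as ``$M_q\ne0$ for some $q\ge d-w_p$'' only restates the claim, because $M_\sigma\ne0$.

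This example also falls into your remaining case for $p=x$, since the only monomial attached to $y$ is $y^3$. The chain starting at $p$ is trivial here ($x^4=x^3\cdot x$), so it is unclear what the telescoping bound and $w_{\text{last}}$ would be. The comparison $w_p\le w_{\text{last}}$ is asserted, not proved.

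The missing idea is to stop searching for a summand $d-2w_k\ge w_p$ adapted to $p$. Use only the monomial attached to a variable of \emph{maximal} weight. Let $w_r=\max_i w_i$. Isolatedness gives a monomial $x_r^a x_j$ in $f$, with $j=r$ allowed, and $\mult(f)\ge3$ forces $a\ge2$, so $d\ge 2w_r+w_j$. Now $2W-w_j-w_p$ is a sum of $2n-2$ weights counted with multiplicity, each at most $w_r$. Since $n\ge2$, this gives
\[
 2W-w_p\le 2(n-1)w_r+w_j\le (n-1)(2w_r+w_j)\le (n-1)d .
\]
This is exactly $\epsilon_p=(n-1)d-2W+w_p\ge0$, uniformly in $p$ and without case analysis. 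It is the paper's argument.
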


\begin{proof}
This is the weight estimate in
\cite{ChenHussainYauZuo2020}; we repeat it for completeness.
Choose $r$ such that
\[
 w_r=\max\{w_1,\ldots,w_n\}.
\]
Since the critical point is isolated, $f$ contains a monomial of the form
$x_r^a x_j$ for some $a\ge1$ and some $j$.  Otherwise every first partial
derivative would vanish on the $x_r$-axis.  The assumption
$\mult(f)\ge3$ gives $a\ge2$.  Hence
\[
 d=aw_r+w_j\ge2w_r+w_j.
\]
For any $p$,
\begin{align*}
 \epsilon_p
 &=(n-1)d-2W+w_p\\
 &\ge2(n-1)w_r+(n-1)w_j-2W+w_p\\
 &\ge2(n-1)w_r+w_j+w_p-2W\\
 &=2(n-1)w_r-(2W-w_j-w_p).
\end{align*}
The last parenthesis is a sum of $2n-2$ weights, counted with
multiplicity.  Each is at most $w_r$.  Thus
\[
 2W-w_j-w_p\le2(n-1)w_r,
\]
and $\epsilon_p\ge0$.
\end{proof}

Since $\ol h$ has degree $\sigma$, equations
\eqref{eq:Cramer-degree} and \eqref{eq:top-degree} give

\begin{equation}\label{eq:obstruction-cramer-zero}
 \operatorname{ob}(D_p)=D_p(\ol h)
 \in B_{\sigma+\epsilon_p}=0.
\end{equation}
Together with the preceding exact sequence, this yields the following
consequence.

\begin{proposition}\label{prop:obstruction-vanishes}
The obstruction map
\[
 \operatorname{ob}:\Der_{\CC}(M,B)\longrightarrow \Soc(B)
\]
is the zero map.
\end{proposition}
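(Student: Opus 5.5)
The plan is to show that $\operatorname{ob}$ vanishes on a spanning set of $\Der_{\CC}(M,B)$. Since $\operatorname{ob}$ is $\CC$-linear, this suffices.

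The spanning set comes from \cref{prop:relative-cramer}. The exact sequence \eqref{eq:relative-cramer} shows that
\[
 \Der_{\CC}(M,B)=\im(\phi)+\operatorname{span}_{\CC}\{D_1,\ldots,D_n\},
\]
where $\phi:\Der_{\CC}(M,M)\to\Der_{\CC}(M,B)$ is composition with $\pi$. To see this, take $D\in\Der_{\CC}(M,B)$. Since $\partial(D_p)=\ol h\,e_p$ and these vectors form a basis of $\Sigma$, there are scalars $c_p$ with $\partial\bigl(D-\sum_p c_pD_p\bigr)=0$. By exactness at $\Der_{\CC}(M,B)$, the difference $D-\sum_p c_pD_p$ lies in $\im(\phi)$. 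One should check that the map $\Der_{\CC}(M,M)\to\Der_{\CC}(M,B)$ in the snake sequence is exactly $\phi$. This holds because, under \cref{lem:relative-kernel}, both are induced by $\pi^n$ on the vectors $(D(\ol{x_j}))_j$.

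Next, $\operatorname{ob}$ vanishes on $\im(\phi)$ by \eqref{eq:ob-image-M-zero}. That step rests on \cref{lem:socle-preservation} together with $\Soc(M)=\CC\ol h$: every derivation of $M$ sends $\ol h$ to a multiple of $\ol h$, which maps to zero in $B$.

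Then $\operatorname{ob}$ vanishes on each Cramer derivation by \eqref{eq:obstruction-cramer-zero}. The main point to verify is the degree bookkeeping. Since $D_p$ is defined on the generators $\ol{x_j}$ by weighted homogeneous elements of degree $\deg_w(\ol{x_j})+\epsilon_p$, it is a graded derivation of degree $\epsilon_p$. By the Leibniz rule it therefore sends $M_q$ into $B_{q+\epsilon_p}$. Hence $D_p(\ol h)\in B_{\sigma+\epsilon_p}$, and this space is $0$ by \eqref{eq:top-degree} combined with $\epsilon_p\ge0$ from \cref{lem:Cramer-nonnegative}.

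Combining the spanning statement with linearity gives $\operatorname{ob}=0$. The only substantive obstacle is the nonnegativity $\epsilon_p\ge0$, which uses $\mult(f)\ge3$. That is already established in \cref{lem:Cramer-nonnegative}, so the remaining work is the routine verification that $D_p$ is homogeneous of the stated degree.
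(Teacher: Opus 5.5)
Your proposal is correct and follows the paper's proof. The paper's argument has the same structure: $\operatorname{ob}$ vanishes on $\im\phi$ by socle preservation, it vanishes on each Cramer derivation $D_p$ because $B_{\sigma+\epsilon_p}=0$, and these together span $\Der_{\CC}(M,B)$ by \cref{prop:relative-cramer}. Your explicit checks, that the snake-lemma map coincides with $\phi$ and that $D_p$ is graded of degree $\epsilon_p$, simply spell out steps the paper states more briefly.
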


\begin{proof}
Let
\[
 \phi:\Der_{\CC}(M,M)\longrightarrow \Der_{\CC}(M,B),
 \qquad
 \widetilde D\longmapsto \pi\circ \widetilde D
\]
be the natural map.

By \eqref{eq:ob-image-M-zero},
\[
 \operatorname{ob}(\im\phi)=0.
\]

By Proposition \ref{prop:relative-cramer}, the classes of the adjugate-Hessian
derivations
\[
 D_1,\ldots,D_n
\]
span the quotient
\[
 \Der_{\CC}(M,B)/\im\phi.
\]

By \eqref{eq:obstruction-cramer-zero}, one gets
\[
 \operatorname{ob}(D_p)=0
\]
for every $p$.  These classes span the quotient, so
$\operatorname{ob}=0$ on all of $\Der_{\CC}(M,B)$.
\end{proof}

\begin{theorem}
\label{thm:hessian-cramer}
Let \(f\in\CC[x_1,\ldots,x_n]\), \(n\ge2\), be weighted homogeneous
with positive weights. Assume that \(f\) has an isolated critical point at
the origin and \(\mult(f)\ge3\). Let
\[
 M=\CC[x_1,\ldots,x_n]/J_f,
 \qquad
 B=M/(\ol h_f).
\]
Then there is an exact sequence of finite-dimensional complex vector spaces
\begin{equation}\label{eq:hessian-cramer-final}
 0\longrightarrow\Soc(M)^{\oplus n}
 \longrightarrow\Der_{\CC}(M,M)
 \longrightarrow\Der_{\CC}(B,B)
 \longrightarrow\Soc(M)^{\oplus n}
 \longrightarrow0.
\end{equation}
Consequently,
\[
 \dim_{\CC}\Der_{\CC}(B,B)
 =
 \dim_{\CC}\Der_{\CC}(M,M).
\]
\end{theorem}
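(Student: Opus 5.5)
The plan is to splice together the two exact sequences already established: the relative Cramer sequence \eqref{eq:relative-cramer} of \cref{prop:relative-cramer}, and the obstruction sequence \eqref{eq:obstruction-exact} of \cref{lem:obstruction}. By \cref{prop:obstruction-vanishes} the obstruction map $\operatorname{ob}$ is identically zero. Exactness of \eqref{eq:obstruction-exact} then says that the inclusion $\Der_{\CC}(B,B)\to\Der_{\CC}(M,B)$, which is composition with $\pi$, is an isomorphism. Concretely, every relative derivation $D:M\to B$ kills $(\ol h)$, so it factors uniquely through $B$.

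Next, substitute $\Der_{\CC}(B,B)$ for $\Der_{\CC}(M,B)$ in \eqref{eq:relative-cramer}. The map $\Der_{\CC}(M,M)\to\Der_{\CC}(M,B)$, $\widetilde D\mapsto\pi\circ\widetilde D$, becomes the map sending $\widetilde D$ to the induced derivation of $B$. This is well defined because, by \cref{lem:socle-preservation}, $\widetilde D(\ol h)\in\CC\ol h$. The connecting map $\partial$ is transported to $\Der_{\CC}(B,B)$ by precomposing with the isomorphism. Exactness is preserved under this identification, so we obtain \eqref{eq:hessian-cramer-final} with $\Sigma=\Soc(M)^{\oplus n}$.

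For the dimension count, $\Soc(M)=\CC\ol h$ is one-dimensional by \eqref{eq:milnor-socle}. Both end terms therefore have dimension $n$. The alternating sum of dimensions in a finite exact sequence vanishes, which gives $\dim\Der_{\CC}(B,B)=\dim\Der_{\CC}(M,M)$.

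Essentially all the substantive work sits in earlier results. The only real obstacle was the vanishing of $\operatorname{ob}$, which required three inputs:
\begin{itemize}
\item the spanning property of the Cramer derivations modulo $\im\phi$;
\item the degree bound $\epsilon_p\ge0$ from \cref{lem:Cramer-nonnegative};
\item the vanishing $B_q=0$ for $q\ge\sigma$.
\end{itemize}
All three are available. In the present step I only need to check that the diagram identifications commute, so that the spliced sequence is genuinely exact at $\Der_{\CC}(B,B)$. This holds because the identification is composition with the fixed quotient map $\pi$.
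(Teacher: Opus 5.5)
Your proposal is correct and follows the paper's proof: Lemma~\ref{lem:obstruction} together with the vanishing of $\operatorname{ob}$ identifies $\Der_{\CC}(B,B)$ with $\Der_{\CC}(M,B)$ via composition with $\pi$, and this is substituted into the relative Cramer sequence \eqref{eq:relative-cramer} before counting dimensions. Your remark that the middle map becomes the induced-derivation map, well defined by socle preservation, is a harmless clarification that the paper leaves implicit.
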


\begin{proof}
By Lemma \ref{lem:obstruction}, the natural map
\[
 \Der_{\CC}(B,B)\longrightarrow \Der_{\CC}(M,B),
 \qquad
 \delta\longmapsto \delta\circ\pi,
\]
identifies \(\Der_{\CC}(B,B)\) with
\[
 \ker(\operatorname{ob}).
\]
By Proposition \ref{prop:obstruction-vanishes}, $\operatorname{ob}=0$.  Hence
\[
 \Der_{\CC}(B,B)\cong \Der_{\CC}(M,B).
\]
Substituting this isomorphism into \eqref{eq:relative-cramer} gives
\eqref{eq:hessian-cramer-final}.  Taking dimensions yields
\begin{equation}\label{eq:dimension-equality}
 \dim_{\CC}\Der_{\CC}(B,B)
 =
 \dim_{\CC}\Der_{\CC}(M,M).
\end{equation}
\end{proof}
\begin{remark}\label{rem:not-Lie-exact}
The sequence \eqref{eq:hessian-cramer-final} is an exact sequence of vector
spaces.  The middle map is a homomorphism of Lie algebras, but the entire
sequence is not asserted to be exact in a category of Lie algebras.
\end{remark}

\subsection{An alternative proof}

The previous proof of theorem \ref{thm:hessian-cramer} using the snake lemma provides a self-contained, exact-sequence reformulation of the adjugate calculation in \cite{ChenHussainYauZuo2020}. We emphasize this homological approach because it reveals the complete algebraic structure underlying the dimension count. Specifically, the  exact sequence \eqref{eq:hessian-cramer-final} not only captures the dimension equality, but also explicitly explains why both the kernel and cokernel are exactly $n$-dimensional, identifies the concrete derivation representatives for the cokernel, determines their precise grading degrees, and clarifies exactly how the $\mult(f) \ge 3$ assumption geometrically eliminates the descent obstruction to the socle quotient.

We now present a second, alternative proof that pushes the original arguments in the proof of \cite[Theorem~C]{ChenHussainYauZuo2020}  further. While it bypasses the rich structural information provided by the exact sequence, we record this direct approach.

\begin{proposition}
\label{prop:short-CHYZ-proof}
Under the assumptions of \cref{thm:hessian-cramer}, one has
\[
 \lambda^*(V)=\lambda(V).
\]
\end{proposition}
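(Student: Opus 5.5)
The plan is to avoid the snake lemma entirely and compare $\Der_{\CC}(M,M)$ with $\Der_{\CC}(B,B)$ through two restriction maps, as in the proof of \cite[Theorem~C]{ChenHussainYauZuo2020}.

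\textbf{Step 1: a map $\Der_{\CC}(M,M)\to\Der_{\CC}(B,B)$ and its kernel.} By \cref{lem:socle-preservation}, every $\widetilde D\in\Der_{\CC}(M,M)$ preserves $\Soc(M)=\CC\ol h$. Hence it descends to a derivation of $B=M/\Soc(M)$, which gives a map
\[
 \rho:\Der_{\CC}(M,M)\to\Der_{\CC}(B,B).
\]
A derivation lies in $\ker\rho$ exactly when its image is contained in $\CC\ol h$. Such a derivation is determined by the values $\widetilde D(\ol{x_j})=c_j\ol h$ with $c_j\in\CC$. Every such tuple satisfies the Hessian relations, because $H$ has entries in the maximal ideal (this uses $\mult(f)\ge3$), and the maximal ideal kills $\ol h$. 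Therefore $\dim\ker\rho=n$, and
\[
 \dim\im\rho=\lambda(V)-n .
\]

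\textbf{Step 2: $\dim\coker\rho\ge n$.} Graded derivations suffice, since $B$ is graded and its derivation algebra splits into homogeneous pieces. I would lift an arbitrary $\delta\in\Der_{\CC}(B,B)$ to a tuple $v\in M^n$ with $v_j$ lifting $\delta(\ol{x_j})$. Then $Hv\in\Sigma$, and the class of $Hv$ in $\Sigma$ is well defined because $H\Sigma=0$. Moreover, $\delta$ lies in $\im\rho$ if and only if some lift satisfies $Hv=0$, and by the same well-definedness this holds if and only if every lift satisfies $Hv=0$. This gives an injection
\[
 \coker\rho\hookrightarrow\Sigma\cong\CC^n .
\]
The $n$ Cramer derivations realize every element of $\Sigma$, because $H\operatorname{adj}(H)e_p=\ol h e_p$. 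Each of them really is a derivation of $B$: it has degree $\epsilon_p\ge0$ by \cref{lem:Cramer-nonnegative}, so it sends $\ol h$ into $B_{\sigma+\epsilon_p}=0$ by \eqref{eq:top-degree}. Hence $\coker\rho\cong\CC^n$.

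\textbf{Step 3: conclusion.} Combining the two steps,
\[
 \lambda^*(V)=(\lambda(V)-n)+n=\lambda(V),
\]
using $M=A(V)$ and $B=A^*(V)$.

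The main obstacle is Step 2: checking that the Cramer derivations genuinely kill $\ol h$. In a descending derivation this is exactly the degree estimate $\epsilon_p\ge0$. I will also need to confirm that homogeneity is harmless. This holds because $\operatorname{adj}(H)_{jp}$ is homogeneous of degree $\sigma-d+w_j+w_p$, so each $D_p$ is automatically homogeneous of degree $\epsilon_p$ and no graded decomposition argument is needed.
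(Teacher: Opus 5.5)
Your proposal is correct and follows the paper's route. Socle preservation (\cref{lem:socle-preservation}) makes the restriction map $\Der_{\CC}(M,M)\to\Der_{\CC}(B,B)$ defined on all of $\Der_{\CC}(M,M)$, and the equality then follows because its kernel and cokernel are both $n$-dimensional. The paper cites these two counts from the proof of \cite[Theorem~C]{ChenHussainYauZuo2020}, whereas you verify them directly. Your map $\delta\mapsto Hv\in\Sigma$ is the snake-lemma connecting map $\partial$ restricted to $\Der_{\CC}(B,B)$, so you reuse the ingredients of the paper's first proof ($H\Sigma=0$, the Cramer identity, $\epsilon_p\ge0$) rather than avoiding them.
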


\begin{proof}
Use the notation
\[
 I=J_f,\qquad I'=I+(h_f).
\]
The proof of \cite[Theorem~C]{ChenHussainYauZuo2020} considers the natural
map
\begin{equation}\label{eq:CHYZ-natural-map}
 \varphi:
 \Der_{I'/I}(P/I)\longrightarrow\Der_{\CC}(P/I').
\end{equation}
Here $\Der_{I'/I}(P/I)$ denotes the derivations of $P/I$ that preserve
the ideal $I'/I$.  The kernel and cokernel calculations in that proof
give
\[
 \dim_{\CC}\ker\varphi=n,
 \qquad
 \dim_{\CC}\coker\varphi=n.
\]
The ideal $I'/I$ is the one-dimensional socle of $P/I$.  By
Lemma \ref{lem:socle-preservation}, every derivation of $P/I$ preserves this
ideal.  Hence
\[
 \Der_{I'/I}(P/I)=\Der_{\CC}(P/I).
\]
Taking dimensions in \eqref{eq:CHYZ-natural-map} gives
\[
 \dim_{\CC}\Der_{\CC}(P/I')
 =\dim_{\CC}\Der_{\CC}(P/I).
\]
This is the desired equality.
\end{proof}

\begin{theorem}[Conjecture~1.1 in \cite{ChenHussainYauZuo2020}]
\label{thm:CHYZ-conjecture}
Let $n\ge2$, and let $f\in\OO_n$ define an isolated hypersurface
singularity with $\mult(f)\ge3$.  Then
\[
 \lambda^*(V)=\lambda(V).
\]
\end{theorem}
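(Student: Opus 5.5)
The plan is to split into the non-quasi-homogeneous and quasi-homogeneous cases and reduce the latter to \cref{thm:hessian-cramer}.

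If $(V,0)$ is not quasi-homogeneous, \cref{prop:non-qh} gives $A^*(V)=A(V)$, hence $L^*(V)=L(V)$ and $\lambda^*(V)=\lambda(V)$. No hypothesis on $n$ or the multiplicity is needed here.

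Now suppose $(V,0)$ is quasi-homogeneous. Then there is an analytic coordinate change $\phi$ with $g=f\circ\phi$ weighted homogeneous, and we may take $g$ to be a polynomial with positive integer weights. By \cref{prop:contact-invariance}, $A(V(f))\cong A(V(g))$ and $A^*(V(f))\cong A^*(V(g))$. Algebra isomorphisms induce isomorphisms of derivation Lie algebras by conjugation, so $\lambda$ and $\lambda^*$ may be computed for $g$.

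Multiplicity is preserved under an analytic coordinate change, since $\phi^*$ is an automorphism of $\OO_n$ preserving the powers $\m^r$. Hence $\mult(g)\ge3$, and $n\ge2$ is unchanged. Since $g$ is weighted homogeneous, $A(V(g))=M_g$ computed analytically. This agrees with the polynomial quotient $\CC[x]/J_g$ because $J_g$ is $\m$-primary, and likewise $A^*(V(g))=M_g/(\ol h_g)$. So the hypotheses of \cref{thm:hessian-cramer} hold for $g$, and its dimension equality gives
\[
\lambda^*(V)=\dim_{\CC}\Der_{\CC}(B,B)=\dim_{\CC}\Der_{\CC}(M,M)=\lambda(V).
\]
\cref{prop:short-CHYZ-proof} would serve equally well here.

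The main obstacle is bookkeeping rather than mathematics. First, the weights produced by the quasi-homogeneity definition may be rational, but they can be cleared to integers as noted in Section~2. Second, one must identify the analytic algebras with the polynomial quotients used in Section~4. Both points are routine, since $J_g$ is $\m$-primary and the local ring at the origin of $\CC[x]/J_g$ coincides with the finite-dimensional graded algebra itself.
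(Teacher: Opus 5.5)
Your proof is correct and follows the paper's own argument. You use \cref{prop:non-qh} in the non-quasi-homogeneous case; in the quasi-homogeneous case you pass to a weighted homogeneous representative, transfer the algebras via \cref{prop:contact-invariance}, and conclude with \cref{thm:hessian-cramer}, and your added bookkeeping (multiplicity preserved, integral weights, analytic versus polynomial quotients) is accurate and makes explicit what the paper leaves implicit.
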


\begin{proof}
If $(V(f),0)$ is not quasi-homogeneous, apply Proposition \ref{prop:non-qh}.  In
the quasi-homogeneous case, choose a weighted homogeneous polynomial
representative after an analytic change of coordinates.  By
Proposition \ref{prop:contact-invariance}, the moduli algebras, generalized moduli
algebras, and their derivation Lie algebras are unchanged.  The result
now follows from
\cref{thm:hessian-cramer}.
\end{proof}

\section{Splitting and the Hessian corank}

We now pass from the multiplicity statement to the full corank
classification.  The holomorphic splitting lemma, also called the
generalized Morse lemma, states that a germ $f\in\m^2$ of Hessian
corank $c$ is right equivalent to
\begin{equation}\label{eq:splitting-form}
 f\stackrel{r}{\sim}
 g(x_1,\ldots,x_c)+y_1^2+\cdots+y_{n-c}^2,
\end{equation}
where $g\in(x_1,\ldots,x_c)^3$ has an isolated critical point when
$c>0$ and is unique up to right equivalence
\cite[Theorem~2.47]{GreuelLossenShustin2007}.  We first record the
effect of the quadratic summands on the algebras in question.

\begin{proposition}\label{prop:suspension}
Let $g\in\CC\{x_1,\ldots,x_c\}$ have an isolated critical point, and set
\[
 F(x,y)=g(x)+y_1^2+\cdots+y_r^2.
\]
Then
\[
 A(V(F))\cong A(V(g)),
 \qquad
 A^*(V(F))\cong A^*(V(g)).
\]
Consequently,
\[
 L(V(F))\cong L(V(g)),
 \qquad
 L^*(V(F))\cong L^*(V(g))
\]
as Lie algebras.
\end{proposition}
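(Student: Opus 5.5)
We compute the three ideals $(F)+J_F$, $J_F$ and $h_F$ directly in $\OO_{c+r}=\CC\{x,y\}$ and pass to the quotient by the $y$-variables. First, $\partial F/\partial y_k=2y_k$ and $\partial F/\partial x_i=\partial g/\partial x_i$, which depends only on $x$. Hence
\[
 J_F=J_g\,\OO_{c+r}+(y_1,\ldots,y_r).
\]
The natural surjection $\OO_{c+r}\to\OO_c$, $y\mapsto0$, then induces an isomorphism
\[
 \OO_{c+r}/J_F\cong\OO_c/J_g .
\]
Indeed, its kernel is $(y)$, and $J_g\OO_{c+r}$ maps onto $J_g$. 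Under this map $F$ goes to $g$. Therefore $(F)+J_F=(g)\OO_{c+r}+J_g\OO_{c+r}+(y)$, which gives
\[
 A(V(F))\cong\OO_c/(g,J_g)=A(V(g)).
\]
The case $c=0$, where $g$ is a constant germ in $\m^2$, i.e.\ $g=0$ in $\OO_0=\CC$, is covered by the same computation with the convention $\OO_0=\CC$.

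Next we treat the Hessian. Since $F$ is a sum of a function of $x$ and a function of $y$, its Hessian is block diagonal:
\[
 \Hess(F)=\begin{pmatrix}\Hess(g)&0\\0&2I_r\end{pmatrix}.
\]
Hence $h_F=2^r h_g(x)$. Because $2^r$ is a unit, adding $h_F$ to the ideal $(F)+J_F$ is the same as adding $h_g$. Modulo $(y)$ this gives
\[
 A^*(V(F))\cong\OO_c/(g,J_g,h_g)=A^*(V(g)).
\]
For $c=0$ the empty Hessian has $h_g=1$, so $A^*(V(g))=0=A^*(V(F))$, which is consistent with the convention.

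Finally, the statement about derivation Lie algebras follows from the remark in Section~2. An algebra isomorphism $\alpha$ induces the Lie algebra isomorphism $D\mapsto\alpha D\alpha^{-1}$. In the zero-algebra case both sides are $0$ by convention.

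The only step needing care is the first isomorphism, namely that quotienting by $(y)$ really identifies $\OO_{c+r}/(\,J_g\OO_{c+r}+(y)\,)$ with $\OO_c/J_g$. This holds because $\OO_{c+r}/(y)\cong\OO_c$ canonically, and the extended ideal $J_g\OO_{c+r}$ has image $J_g$ under this identification. Everything else is a direct calculation.
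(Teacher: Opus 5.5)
Your proposal is correct and follows the paper's proof. You compute $J_F=J_g\OO_{c+r}+(y_1,\ldots,y_r)$, use the block-diagonal Hessian to get $h_F=2^rh_g$ with $2^r$ a unit, and transport derivations along the resulting algebra isomorphisms via $D\mapsto\alpha D\alpha^{-1}$. The only difference is that you spell out details the paper leaves implicit, namely the identification $\OO_{c+r}/(y)\cong\OO_c$ and the degenerate case $c=0$.
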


\begin{proof}
The Jacobian ideal of $F$ is
\[
 J_F=(J_g,y_1,\ldots,y_r).
\]
Thus all $y_i$ vanish in the two quotient algebras.  The Hessian matrix is
block diagonal:
\[
 \Hess(F)=
 \begin{pmatrix}
  \Hess(g)&0\\
  0&2I_r
 \end{pmatrix}.
\]
Hence
\[
 h_F=2^r h_g.
\]
The algebra isomorphisms follow, and derivation Lie algebras are functorial
under algebra isomorphisms.
\end{proof}

The standard classification of germs of corank at most one as type
$A_k$ is stated in
\cite[Theorem~2.48]{GreuelLossenShustin2007}.  We include the short
one-variable calculation because it also identifies the two Artinian
algebras used here.

\begin{lemma}[One residual variable]\label{lem:one-variable}
Let $g\in\CC\{t\}$ have an isolated critical point and order at least
three.  Then $g$ is right equivalent to $t^{\mu+1}$ for a unique integer
$\mu\ge2$.  Moreover,
\[
 A(V(g))\cong\CC[t]/(t^\mu),
 \qquad
 A^*(V(g))\cong\CC[t]/(t^{\mu-1}).
\]
\end{lemma}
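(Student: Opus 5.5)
Write $g=t^{k}u(t)$ with $u(0)\neq0$ and $k=\operatorname{ord}(g)\ge3$. The order is finite: if $g$ were identically zero, every point would be critical, contradicting the isolated-critical-point hypothesis. We may then take a $k$-th root $v$ of the unit $u$ and set $s=t\,v(t)$. This is an analytic coordinate change, since $s'(0)=v(0)\ne0$, and in the new coordinate $g=s^{k}$. Setting $\mu=k-1$ gives $g\stackrel{r}{\sim}t^{\mu+1}$ with $\mu\ge2$.

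For uniqueness of $\mu$, note that the Milnor number $\dim_{\CC}\CC\{t\}/(g')$ is a right-equivalence invariant, because pullback identifies the Jacobian ideals as in the proof of \cref{prop:contact-invariance}. For $t^{\mu+1}$ this number equals $\dim\CC\{t\}/(t^{\mu})=\mu$, so $\mu$ is determined by $g$. Alternatively, the order of $g$ is visibly invariant under coordinate changes.

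For the algebras, \cref{prop:contact-invariance} lets us replace $g$ by $t^{\mu+1}$. Then $J_g=(t^{\mu})$ and $g\in J_g$, so
\[
A(V(g))=\CC\{t\}/(t^{\mu})\cong\CC[t]/(t^{\mu}).
\]
The Hessian determinant is $h_g=g''=(\mu+1)\mu\,t^{\mu-1}$, a nonzero constant times $t^{\mu-1}$. Since $\mu-1<\mu$, we get $(t^{\mu},h_g)=(t^{\mu-1})$, and therefore
\[
A^*(V(g))\cong\CC[t]/(t^{\mu-1}).
\]
This matches the socle description $A^*=M/\Soc(M)$ recorded earlier in the paper.

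The argument is essentially routine. The only point needing care is the extraction of the $k$-th root of the unit $u$ in $\CC\{t\}$, which exists by the holomorphic logarithm near $u(0)\neq0$, or equivalently by Hensel's lemma.
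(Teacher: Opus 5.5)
Your proposal is correct and follows essentially the same route as the paper: write $g=t^{\mu+1}u(t)$, take an analytic $(\mu+1)$-st root of the unit $u$ to reach the normal form $t^{\mu+1}$, and read off both quotients from $g'=(\mu+1)t^{\mu}$ and $g''=\mu(\mu+1)t^{\mu-1}$. You also make explicit three points the paper leaves implicit: that the order of $g$ is finite, that $\mu$ is unique (by right-equivalence invariance of the Milnor number), and that the algebras transfer to the normal form via \cref{prop:contact-invariance}.
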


\begin{proof}
Write
\[
 g(t)=t^{\mu+1}u(t),\qquad u(0)\ne0.
\]
An analytic $(\mu+1)$-st root of the unit $u$ gives a coordinate change
that sends $g$ to $t^{\mu+1}$.  For this normal form,
\[
 g'(t)=(\mu+1)t^\mu,
 \qquad
 g''(t)=\mu(\mu+1)t^{\mu-1}.
\]
The quotient presentations follow.
\end{proof}

\begin{proof}[Proof of \cref{thm:main-intro}]
Apply the splitting lemma and Proposition \ref{prop:contact-invariance}, and then
remove the quadratic variables by Proposition \ref{prop:suspension}.  It is enough
to study the residual germ $g$ in $c=c(f)$ variables.

If $c=0$, the holomorphic Morse lemma gives a nondegenerate quadratic
form.  Its moduli algebra is $\CC$, while its Hessian determinant is a unit.
Thus $A^*(V)=0$, and both derivation spaces are zero.

If $c=1$, apply Lemma \ref{lem:one-variable}.  For $N\ge1$, a derivation of
$\CC[t]/(t^N)$ is determined by an arbitrary element of $(t)$ as the value
of $t$.  Hence
\[
 \dim_{\CC}\Der_{\CC}(\CC[t]/(t^N))=N-1.
\]
This gives
\[
 \lambda(V)=\mu-1,
 \qquad
 \lambda^*(V)=\mu-2.
\]

If $c\ge2$, then the residual germ $g$ has multiplicity at least three in
$c$ variables.  Apply \cref{thm:CHYZ-conjecture}.
\end{proof}

\section{Consequences and examples}

\begin{corollary}\label{cor:original-conjecture}
If $n\ge2$ and $\mult(f)\ge3$, then
\[
 \lambda^*(V)=\lambda(V).
\]
\end{corollary}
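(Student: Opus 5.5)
This corollary is a restatement of \cref{thm:CHYZ-conjecture}, and I would deduce it from the corank trichotomy of \cref{thm:main-intro}. The plan is to show that the hypotheses $n\ge2$ and $\mult(f)\ge3$ force the Hessian corank to be $c(f)=n\ge2$. Then case~(3) of \cref{thm:main-intro} gives $\lambda^*(V)=\lambda(V)$ directly.

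The key step is a one-line computation. Since $\mult(f)\ge3$, every second partial derivative $f_{ij}=\partial^2 f/\partial x_i\partial x_j$ lies in $\m$. Hence $\Hess(f)(0)$ is the zero matrix and $\ker\Hess(f)(0)=\CC^n$, so
\[
 c(f)=\dim_{\CC}\ker\Hess(f)(0)=n\ge2.
\]
In particular $f$ is neither Morse nor of corank one, so cases~(1) and~(2) of \cref{thm:main-intro} do not apply. The corank is an analytic invariant by \cref{prop:contact-invariance}, so the conclusion does not depend on the chosen representative.

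Alternatively, one can bypass the splitting lemma and argue by cases on quasi-homogeneity, exactly as in the proof of \cref{thm:CHYZ-conjecture}. If $f$ is not quasi-homogeneous, \cref{prop:non-qh} gives $A^*(V)=A(V)$. Otherwise, pass to a weighted homogeneous representative using \cref{prop:contact-invariance}; this keeps $\mult\ge3$, because multiplicity is invariant under right equivalence and $A$, $A^*$ are unchanged. Then apply \cref{thm:hessian-cramer}.

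I expect no real obstacle here. The only point to check is that, in the weighted homogeneous reduction, the polynomial representative still has multiplicity at least three. This holds because the Hessian at the origin transforms by congruence, as in \cref{prop:contact-invariance}, so it stays zero.
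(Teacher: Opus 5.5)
Your proposal is correct and matches the paper's proof: $\mult(f)\ge3$ forces $\Hess(f)(0)=0$, so $c(f)=n\ge2$ and case~(3) of \cref{thm:main-intro} applies. Your alternative route is the paper's proof of \cref{thm:CHYZ-conjecture}, of which this corollary is a restatement, so it is also valid.
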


\begin{proof}
The Hessian matrix vanishes at the origin, so
$c(f)=n\ge2$.  Apply \cref{thm:main-intro}.
\end{proof}

\begin{corollary}[The corank-one exception]\label{cor:defect}
Define
\[
 \delta_H(V)=\lambda(V)-\lambda^*(V).
\]
Then $\delta_H(V)\in\{0,1\}$, and $\delta_H(V)=1$ if and only if $V$ is
stably right equivalent to an $A_k$ singularity with $k\ge2$.
\end{corollary}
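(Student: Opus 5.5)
The plan is to read off the value of $\delta_H(V)$ directly from \cref{thm:main-intro} by splitting into the three Hessian-corank cases, and then to identify the corank-one case with the stable $A_k$ class for $k\ge2$.

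\textbf{Step 1 (the value set of $\delta_H$).} If $c(f)=0$, part (1) gives $\lambda=\lambda^*=0$, so $\delta_H=0$. If $c(f)\ge2$, part (3) gives $\delta_H=0$. If $c(f)=1$, part (2) gives $\lambda=\mu-1$ and $\lambda^*=\mu-2$ with $\mu\ge2$, so $\delta_H=1$. Hence $\delta_H(V)\in\{0,1\}$, and $\delta_H(V)=1$ holds exactly when $c(f)=1$.

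\textbf{Step 2 ($c(f)=1$ implies stable $A_k$ with $k\ge2$).} I will use the splitting lemma \eqref{eq:splitting-form} with $c=1$. This gives $f\stackrel{r}{\sim} g(x_1)+y_1^2+\cdots+y_{n-1}^2$, where $g\in(x_1)^3$. By \cref{lem:one-variable}, $g\stackrel{r}{\sim}t^{\mu+1}$ with $\mu\ge2$. Therefore $f$ is stably right equivalent to $A_\mu=t^{\mu+1}$ with $\mu\ge2$.

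\textbf{Step 3 ($A_k$ with $k\ge2$ implies $c(f)=1$).} Suppose $f\stackrel{r}{\sim}t^{k+1}+\sum_i y_i^2$, possibly after adding or removing squares. Since $k+1\ge3$, the Hessian at $0$ is $\operatorname{diag}(0,2,\dots,2)$, which has corank exactly $1$.

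\textbf{Step 4 (invariance under stabilization).} The only point needing care is that $\delta_H$ and $c(f)$ are unchanged along the stable equivalence. Right equivalence preserves the corank by \cref{prop:contact-invariance}. Adding or removing quadratic variables leaves the corank unchanged, since it only adds or deletes a nondegenerate block $2I_r$. It also leaves $\lambda$ and $\lambda^*$ unchanged by \cref{prop:suspension}.

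The main (mild) obstacle is fixing the convention for "stably right equivalent" so that the number of variables may differ. Once squares are allowed to be added or removed, the argument is just bookkeeping of the Hessian block structure.
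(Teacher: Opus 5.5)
Your proposal is correct and follows essentially the same route as the paper. The value set of $\delta_H$ and the equivalence $\delta_H(V)=1\iff c(f)=1$ are read off from \cref{thm:main-intro}; the forward direction uses the splitting lemma together with \cref{lem:one-variable}; and the converse uses the Hessian of the normal form $t^{k+1}+\sum y_i^2$. Your Step~4, which checks that the corank is unchanged under right equivalence and under adding or removing squares, spells out what the paper's one-line ``converse is immediate'' leaves implicit.
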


\begin{proof}
The splitting lemma reduces the corank-one case to one variable, where
Lemma \ref{lem:one-variable} gives the normal form $t^{k+1}$.  The converse is
immediate from the same normal form.  The numerical statement follows
from \cref{thm:main-intro}.
\end{proof}

\begin{corollary}[Plane curves and simple singularities]
\label{cor:plane-ADE}
For an isolated plane curve singularity, strict inequality
$\lambda^*(V)<\lambda(V)$ occurs exactly for type $A_k$, $k\ge2$.
Among the simple hypersurface singularities, the same statement holds
after adding or removing nondegenerate quadratic variables.  In
particular,
\[
 \lambda(A_k)=k-1,\qquad \lambda^*(A_k)=k-2,
\]
while $\lambda^*=\lambda$ for $D_k,E_6,E_7,E_8$.
\end{corollary}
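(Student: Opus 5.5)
The plan is to reduce \cref{cor:plane-ADE} to the corank trichotomy of \cref{thm:main-intro}, using \cref{cor:defect} for the characterization of the defect. First I treat plane curves, so $n=2$ and $f\in\m^2$ has an isolated critical point. Then $c(f)\in\{0,1,2\}$.

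\begin{itemize}
\item If $c(f)=0$, then $f$ is Morse. This is the node $A_1$, and by part (1) of \cref{thm:main-intro} we have $\lambda=\lambda^*=0$, so there is no strict inequality.
\item If $c(f)=2$, part (3) gives $\lambda^*=\lambda$.
\item If $c(f)=1$, the splitting lemma \eqref{eq:splitting-form} together with \cref{lem:one-variable} puts $f$ in the form $t^{\mu+1}+y^2$ with $\mu\ge2$. This is $A_k$ with $k=\mu\ge2$, and part (2) gives $\lambda^*=\lambda-1<\lambda$.
\end{itemize}

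Conversely, $A_k$ with $k\ge2$ has Hessian corank one, since the quadratic part $y^2$ has rank one. So strict inequality occurs exactly for $A_k$ with $k\ge2$. Equivalently, this is \cref{cor:defect} specialized to $n=2$.

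For the simple singularities in arbitrary $n$, I use the normal forms:
\begin{itemize}
\item $A_k$: $x^{k+1}+\sum y_i^2$;
\item $D_k$ with $k\ge4$: $x^2y+y^{k-1}+\sum z_i^2$;
\item $E_6,E_7,E_8$: $x^3+y^4$, $x^3+xy^3$, $x^3+y^5$, each plus $\sum z_i^2$.
\end{itemize}
Adding or removing quadratic variables changes neither algebra, by \cref{prop:suspension} together with \cref{prop:contact-invariance}. So both $\lambda$ and $\lambda^*$ are stable invariants, and the question reduces to the residual germ.

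For $D_k$ and $E_{6,7,8}$ the residual germ is a two-variable polynomial of multiplicity $3$. Hence its Hessian vanishes at the origin, the corank is $2$, and part (3), or directly \cref{cor:original-conjecture}, gives $\lambda^*=\lambda$.

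For $A_k$ with $k\ge2$ the residual germ is $t^{k+1}$, so $\mu=k$. \cref{lem:one-variable} gives $A\cong\CC[t]/(t^k)$ and $A^*\cong\CC[t]/(t^{k-1})$. The count $\dim\Der(\CC[t]/(t^N))=N-1$ from the proof of \cref{thm:main-intro} then yields $\lambda(A_k)=k-1$ and $\lambda^*(A_k)=k-2$.

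The only point needing care is the degenerate case $A_1$, and $A_k$ for small $k$. For $A_1$ the formula $\lambda^*=k-2$ would read $-1$, which is wrong, since $A^*=0$ and $\lambda^*=0$ by the zero-algebra convention. So the displayed formulas $\lambda(A_k)=k-1$, $\lambda^*(A_k)=k-2$ must be read for $k\ge2$, with $A_1$ covered by part (1) of \cref{thm:main-intro}, where both numbers are $0$. For $k=2$ the formula gives $\lambda^*=0$, which agrees with $A^*\cong\CC$. Apart from this bookkeeping, the argument is a direct application of the main theorem.
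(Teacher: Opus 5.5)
Your proposal is correct and follows essentially the same route as the paper. The paper recalls the standard normal forms, notes that $A_k$ has one residual variable while $D_k,E_6,E_7,E_8$ have two residual variables of order at least three, and applies the corank trichotomy of \cref{thm:main-intro}, with \cref{prop:suspension} making $\lambda$ and $\lambda^*$ stable invariants. Your extra bookkeeping is also right: the displayed formula $\lambda^*(A_k)=k-2$ holds only for $k\ge2$, since for $A_1$ one has $A^*(V)=0$ and $\lambda^*=\lambda=0$ by part (1) of the theorem; the paper leaves this restriction implicit.
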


\begin{proof}
The standard normal forms are recalled in
\cite{ArnoldGuseinZadeVarchenko1985}.  Type $A_k$ has one residual
variable.  The residual forms of $D_k,E_6,E_7,E_8$ have two variables and
order at least three.  Apply \cref{thm:main-intro}.
\end{proof}

\begin{example}[The Lie algebras for $A_k$]\label{ex:Ak}
Let $k\ge2$ and
\[
 f=t^{k+1}+y_1^2+\cdots+y_r^2.
\]
Then
\[
 A(V)\cong\CC[t]/(t^k),
 \qquad
 A^*(V)\cong\CC[t]/(t^{k-1}).
\]
A basis of $L(V)$ is
\[
 e_i=t^i\frac{d}{dt},\qquad 1\le i\le k-1,
\]
with bracket
\[
 [e_i,e_j]=
 \begin{cases}
  (j-i)e_{i+j-1},&i+j-1\le k-1,\\
  0,&i+j-1\ge k.
 \end{cases}
\]
The natural map $L(V)\to L^*(V)$ is surjective, and its kernel is
$\CC e_{k-1}$.  The one-dimensional defect is visible at the level of Lie
algebras.
\end{example}

\begin{example}[Multiplicity two with equality]\label{ex:mult-two-equality}
Consider
\[
 f=x^3+y^3+z_1^2+\cdots+z_r^2,
 \qquad r\ge1.
\]
The multiplicity is two, but the Hessian corank is two.  By quadratic
suspension,
\[
 A(V)\cong\CC[x,y]/(x^2,y^2),
\]
and
\[
 A^*(V)\cong\CC[x,y]/(x^2,xy,y^2).
\]
The first derivation algebra has basis
\[
 x\partial_x,\quad xy\partial_x,\quad
 y\partial_y,\quad xy\partial_y.
\]
The second algebra has a square-zero maximal ideal of dimension two, so
its derivation algebra is the full endomorphism algebra of that maximal
ideal.  Both dimensions are four.  Thus multiplicity at least three is a
sufficient condition, not a necessary one.
\end{example}

\begin{example}[A multiplicity-two counterexample]\label{ex:counterexample}
Let
\[
 f=x^a y+y^2,
 \qquad a\ge2.
\]
Completing the square gives
\[
 f=\left(y+\frac{1}{2}x^a\right)^2-\frac{1}{4}x^{2a}.
\]
Thus $f$ is a quadratic suspension of a one-variable germ and has Hessian
corank one.  Hence
\[
 A(V)\cong\CC[x]/(x^{2a-1}),
 \qquad
 A^*(V)\cong\CC[x]/(x^{2a-2}),
\]
and
\[
 \lambda(V)-\lambda^*(V)=1.
\]
The failure is caused by corank one, not by multiplicity two alone.
\end{example}

\begin{example}[A family crossing corank strata]\label{ex:family}
Consider
\[
 f_s=x^3+y^3+sxy+z^2,
 \qquad s\in\CC.
\]
For $s\ne0$, the Hessian at the origin is nondegenerate.  Thus $f_s$ is
Morse and
\[
 \lambda(V(f_s))=\lambda^*(V(f_s))=0.
\]
At $s=0$, the Hessian corank is two.  Quadratic suspension gives
\[
 A(V(f_0))\cong\CC[x,y]/(x^2,y^2),
\]
\[
 A^*(V(f_0))\cong\CC[x,y]/(x^2,xy,y^2),
\]
so
\[
 \lambda(V(f_0))=\lambda^*(V(f_0))=4.
\]
The common dimension jumps, but the equality is preserved on both
corank strata.
\end{example}


\appendix

\section{Technical account of AIM-assisted exploratory work}
\label{app:aim}

This appendix records the role of the AI Mathematician (AIM) system
described in \cite{https://arxiv.org/html/2505.22451v1} during the exploratory stage of this
project.  It does not add any hypothesis or result to the main text.
All mathematical statements used in the paper were checked by the
authors.

\subsection{Starting point and responsibilities}

The starting problem was to compare the Yau algebra and the new Yau
algebra
\[
 L(V)=\Der_{\CC}(A(V),A(V)),
 \qquad
 L^*(V)=\Der_{\CC}(A^*(V),A^*(V)),
\]
and, in particular, to determine whether
\[
 \lambda^*(V)=\lambda(V).
\]
In the weighted homogeneous case, write \(M=A(V)\) and \(B=A^*(V)\).
The authors proposed to compare
\(\Der_{\CC}(M,M)\) and \(\Der_{\CC}(B,B)\) through the intermediate
space \(\Der_{\CC}(M,B)\).  The Hessian matrix describes
\(\Der_{\CC}(M,B)\) as a kernel, while the gradient of the Hessian
determinant gives the additional condition for an element of this space
to define a derivation of \(B\).  Exact sequences were then used to
compare the three derivation spaces.  AIM was used to organize possible
steps in this approach, test examples, and examine candidate
intermediate statements.

The AIM output was treated as exploratory material, not as a proof.
The authors were responsible for the final theorem and its proof. They verified every argument included in the paper.  The authors take
responsibility for all statements and proofs.

\subsection{The weighted homogeneous case}

Let \(f\in\CC[x_1,\ldots,x_n]\), \(n\ge2\), be weighted homogeneous with
an isolated critical point, and assume that \(\mult(f)\ge3\).  Set
\[
 M=\CC[x_1,\ldots,x_n]/J_f,
 \qquad
 H=\Hess(f),
 \qquad
 h=\det H,
 \qquad
 B=M/(\ol h).
\]
Euler's identity and the classical theorem in
\cite{SchejaStorch1975} give
\[
 M=A(V),
 \qquad
 \Soc(M)=\CC\ol h,
 \qquad
 B=A^*(V)=M/\Soc(M).
\]
If \(f\) has weighted degree \(d\), the variables have positive weights
\(w_1,\ldots,w_n\), and \(W=\sum_iw_i\), then
\[
 \operatorname{Hilb}_M(t)
 =
 \prod_{i=1}^n\frac{1-t^{d-w_i}}{1-t^{w_i}},
 \qquad
 \sigma=nd-2W.
\]
In particular,
\[
 M_\sigma=\CC\ol h,
 \qquad
 B_q=0\quad(q\ge\sigma),
 \qquad
 \dim_{\CC}B=\mu(V)-1.
\]
These are standard facts and were used as inputs to the exploration.

The
relations defining derivations give
\[
 \Der_{\CC}(M,B)
 \cong
 \ker\bigl(H:B^n\longrightarrow B^n\bigr)
\]
and
\[
 \Der_{\CC}(B,B)
 \cong
 \ker\left(
 \begin{bmatrix}
 H\\
 \ol{\nabla h}^{\,t}
 \end{bmatrix}
 :B^n\longrightarrow B^{n+1}
 \right).
\]
Under these identifications, a derivation \(D:M\to B\) induces a
derivation of \(B=M/(\ol h)\) precisely when it vanishes on
\[
 (\ol h)=\ker(M\longrightarrow B),
\]
or equivalently when \(D(\ol h)=0\).  This condition is measured by
\[
 \operatorname{ob}:
 \ker\bigl(H:B^n\longrightarrow B^n\bigr)
 \longrightarrow B,
 \qquad
 u\longmapsto\ol{\nabla h}^{\,t}u.
\]
Equivalently, if \(u\) represents \(D\in\Der_{\CC}(M,B)\), then
\[
 \operatorname{ob}(D)=D(\ol h).
\]
It follows that
\[
 \dim_{\CC}\Der_{\CC}(M,B)
 -
 \dim_{\CC}\Der_{\CC}(B,B)
 =
 \operatorname{rank}_{\CC}(\operatorname{ob}).
\]
This converts the dimension comparison into a question about one
specified map.


\subsection{Exact sequence and Cramer derivations}
The authors proposed to compare
\(\Der_{\CC}(M,M)\) and \(\Der_{\CC}(B,B)\) through the intermediate
space \(\Der_{\CC}(M,B)\).  A pre-manuscript note generated with AIM
developed this proposal into a preliminary two-step argument for the
weighted homogeneous case.

The adjugate-Hessian derivations and the degree estimate used in this
argument already appear in
\cite[Theorem~C]{ChenHussainYauZuo2020}.  The AIM-generated note did not
originate these ingredients.  It used them in the comparison of the
three derivation spaces described above.

The first step compares \(\Der_{\CC}(M,M)\) with
\(\Der_{\CC}(M,B)\).  Since
\[
 B=M/(\ol h),
 \qquad
 \Soc(M)=\CC\ol h,
\]
the quotient map \(M^n\to B^n\) gives a short exact sequence
\[
 0\longrightarrow\Soc(M)^{\oplus n}
 \longrightarrow M^n
 \longrightarrow B^n
 \longrightarrow0.
\]
The Hessian matrix acts on both \(M^n\) and \(B^n\).  Since
\(\mult(f)\ge3\), its entries lie in the maximal ideal of \(M\), and
hence
\[
 H\Soc(M)^{\oplus n}=0.
\]

Therefore  there is a commutative diagram with exact rows:
\[
\begin{tikzcd}
0 \arrow[r] &
\Soc(M)^{\oplus n}
\arrow[r] \arrow[d,"0"'] &
M^n
\arrow[r] \arrow[d,"H"] &
B^n
\arrow[r] \arrow[d,"H"] &
0
\\
0 \arrow[r] &
\Soc(M)^{\oplus n}
\arrow[r] &
M^n
\arrow[r] &
B^n
\arrow[r] &
0.
\end{tikzcd}
\]
Applying the snake lemma to this diagram, and then using
$
 H\operatorname{adj}(H)=hI_n
$
gives the exact sequence
\[
 0\longrightarrow\Soc(M)^{\oplus n}
 \longrightarrow\Der_{\CC}(M,M)
 \longrightarrow\Der_{\CC}(M,B)
 \xrightarrow{\partial}\Soc(M)^{\oplus n}
 \longrightarrow0.
\]
Consequently,
\[
 \dim_{\CC}\Der_{\CC}(M,B)
 =
 \dim_{\CC}\Der_{\CC}(M,M).
\]

For \(p=1,\ldots,n\), the \(p\)-th column of
\(\operatorname{adj}(H)\) defines a derivation \(D_p:M\to B\) by
\[
 D_p(\ol{x_j})
 =
 \pi\bigl(\ol{\operatorname{adj}(H)_{jp}}\bigr),
 \qquad 1\le j\le n,
\]
where \(\pi:M\to B\) is the quotient map.  Cramer's identity gives
\[
 \partial(D_p)=\ol h\,e_p.
\]
The classes of \(D_1,\ldots,D_n\) therefore span the quotient
\[
 Q:=
 \Der_{\CC}(M,B)\big/
 \im\bigl(
 \Der_{\CC}(M,M)\longrightarrow\Der_{\CC}(M,B)
 \bigr).
\]

The second step determines which derivations \(D:M\to B\) define
derivations of the quotient algebra \(B=M/(\ol h)\).  Such a derivation
defines a derivation of \(B\) precisely when
\[
 D(\ol h)=0.
\]
The AIM-generated note organized this condition through the map
\[
 \operatorname{ob}:\Der_{\CC}(M,B)\longrightarrow B,
 \qquad
 \operatorname{ob}(D)=D(\ol h).
\]
Preservation of the one-dimensional socle of \(M\) shows that
\(\operatorname{ob}\) vanishes on the image of
\(\Der_{\CC}(M,M)\).  For the derivations \(D_p\), one has
\[
 \deg(D_p)=\epsilon_p
 =\sigma-d+w_p.
\]
The degree estimate gives \(\epsilon_p\ge0\).  Since \(\ol h\) has
degree \(\sigma\) and
\[
 B_q=0\qquad(q\ge\sigma),
\]
it follows that
\[
 \operatorname{ob}(D_p)
 =D_p(\ol h)
 \in B_{\sigma+\epsilon_p}
 =0.
\]
The classes of the \(D_p\) span \(Q\).  Hence
\(\operatorname{ob}=0\) on all of \(\Der_{\CC}(M,B)\).  Every
derivation \(M\to B\) therefore defines a derivation of \(B\), and
\[
 \dim_{\CC}\Der_{\CC}(B,B)
 =
 \dim_{\CC}\Der_{\CC}(M,B).
\]
Together with the first step, this gives
\[
 \dim_{\CC}\Der_{\CC}(B,B)
 =
 \dim_{\CC}\Der_{\CC}(M,M).
\]

The AIM-generated note contained this preliminary two-step argument.
The authors checked every map, degree calculation, and exactness
statement, corrected the intermediate argument where necessary, and
wrote the final proof given in the manuscript.


\subsection{AIM-assisted examples}

AIM was used to explore and check examples that were later included in
the final section of the paper.  For the \(A_k\) singularities, the
calculation shows that the natural map
\[
 L(V)\longrightarrow L^*(V)
\]
has a one-dimensional kernel.  AIM-assisted calculations also gave
\[
 \lambda(V)=\lambda^*(V)=4
\]
for \(f=x^3+y^3+\sum z_i^2\), and a difference of one for
\(f=x^ay+y^2\).  For the family
\[
 f_s=x^3+y^3+sxy+z^2,
\]
 both dimensions are zero for \(s\ne0\), whereas for \(s=0\), both
dimensions are four.  Thus their common value may jump even though the
equality between them is preserved.

The authors selected the examples, verified all calculations, and
included them to illustrate the main result.

\subsection{Scope of assistance}

The main purpose of the paper is to prove
\[
 \lambda^*(V)=\lambda(V)
\]
for isolated hypersurface singularities with \(n\ge2\) and
\(\mult(f)\ge3\), thereby proving Conjecture~1.1.  The authors
formulated the problem and proposed comparing
\(\Der_{\CC}(M,M)\) and \(\Der_{\CC}(B,B)\) through
\(\Der_{\CC}(M,B)\).  A pre-manuscript note generated with AIM contained
a preliminary two-step argument for the weighted homogeneous case.  AIM was
also used for preliminary calculations of the examples.

The adjugate-Hessian derivations and the required degree estimate were
already known from earlier work and are not attributed to AIM.  The
authors checked and corrected the preliminary argument, supplied the
remaining arguments needed to prove the conjecture in full, and wrote
the final proof.  They take responsibility for every mathematical
statement and calculation in the paper.

\subsection{Internal development record}

This account is based on dated AIM session records from June 2026 and
an AIM workspace note dated 5 July 2026, both of which precede the
manuscript version dated 29 July 2026.  These materials record when the
exploratory calculations and the exact-sequence organization described
above were written down.  Because the records are not publicly
available, readers cannot use them to verify the origin of individual
arguments.

\section*{Acknowledgments} 
Yau is supported by the Tsinghua University Education Foundation.

 During the preparation of this manuscript, the AI Mathematician (AIM) agent~(see \cite{https://arxiv.org/html/2505.22451v1}) was used as a research assistance tool under the authors’ direction. It
was helpful for exploring certain technical aspects of commutative algebra
and for suggesting computational examples. All mathematical arguments,
computations, and statements in this paper were verified by
the authors, who take full responsibility for the correctness and content of
the manuscript.

\end{document}